\newtheorem{lemma}{Lemma}[section]
\newtheorem{theorem}[lemma]{Theorem}
\newtheorem{coro}[lemma]{Corollary}
\newtheorem{rk}[lemma]{Remark}
\newtheorem{example}[lemma]{Example}
\def\C{\mathbb{C}}
\def\R{\mathbb{R}}
\def\Z{\mathbb{Z}}
\def\C{\mathcal{C}}
\def\T{\mathcal{T}}
\def\GL{\mathrm{GL}(2,\mathbb{Z})}
\def\PGL{\mathrm{PGL}(2,\mathbb{Z})}
\def\PSL{\mathrm{PSL}(2,\mathbb{Z})}
\def\={\;=\;}
\def\.={\;\dot{=}\;}
\def\Im{\mathrm{Im}}
\def\Re{\mathrm{Re}}
\def\l{\ell}
\def\H{\mathcal{H}}
\def\c{\bm c}
\def\p{\bm p}
\def\q{\bm q}
\DeclarePairedDelimiter\ceil{\lceil}{\rceil}
\DeclarePairedDelimiter\floor{\lfloor}{\rfloor}
\begin{document}

\title[Cycle integrals of modular functions  and  Markov geodesics]{Cycle integrals of modular functions, Markov geodesics and a conjecture of Kaneko}

\author{P. Bengoechea}
\address{ ETH, Mathematics Dept.
\\CH-8092, Z\"urich, Switzerland}
\email{paloma.bengoechea@math.ethz.ch}
\thanks{Bengoechea's research is supported by SNF grant 173976.}

\author{ \"O. Imamoglu}
\address{ ETH, Mathematics Dept.
\\CH-8092, Z\"urich, Switzerland}
\email{ ozlem@math.ethz.ch}
\maketitle

\begin{abstract} In this paper we study the values of modular functions at the Markov quadratics which are defined in terms of their cycle integrals   along the associated closed geodesics. These numbers are shown to satisfy  two properties  that were conjectured by Kaneko. More precisely we show that  the values of a modular function $f$,   along any branch $B$ of the Markov tree, converge to the value of $f$ at the Markov number which is the predecessor of the  tip of $B$.  We also prove an interlacing property for these values.

\end{abstract}  
\section{Introduction}


  A well known theorem of Dirichlet   asserts that for any irrational number $x$, there are infinitely many rational numbers $p/q$ satisfying $|x-\frac{p}{q}|<\frac{1}{q^2}$. For irrational numbers that are algebraic, thanks to a theorem of Roth (\cite{Roth}), the exponent $2$ is optimal. The constant factor, on the other hand, can be improved and    a classical theorem of Hurwitz asserts that  for every irrational number  $x$ there exist infinitely    many rational numbers $p/q$  satisfying
$$
\left|x-\dfrac{p}{q}\right|<\dfrac{1}{\sqrt{5}q^2}.
$$
The constant $1/\sqrt{5}$ is best possible but if we exclude as $x$ the numbers that are $\PGL$-equivalent to the golden ratio $(1+\sqrt{5})/2$, the constant $1/\sqrt{5}$ improves to $1/\sqrt{8}$. If we also exclude the numbers that are $\PGL$-equivalent to $\sqrt{2}$, then the constant  improves to $5/\sqrt{221}$. By proceeding in this way, one obtains the Lagrange spectrum defined by
$$
L:=\left\{\nu(x)\right\}_{x\in\R}\subseteq\left[0,1/\sqrt{5}\right]\quad\mbox{with}\quad
\nu(x)=\liminf_{q\rightarrow\infty} q\|qx\|,
$$
 where $\|x\|$ denotes the distance from a real number $x$
to a closest integer.
The quantity $\nu(x)$  provides a measure of approximation of $x$ by the rationals.
 For almost all $x\in\R$ we have $\nu(x)=0$ and when $\nu(x) >0$ we call $x$ badly approximable. Real quadratic irrationals are badly approximable, the worst ones being the golden ratio and its $\PGL$-equivalents, followed by $\sqrt{2}$ and its $\PGL$-equivalents, etc. The Lagrange spectrum is not discrete (cf \cite{Hall}) but the part of the spectrum  in the subinterval $(1/3,1/\sqrt{5}]$  corresponding to classes of worst irrational numbers is,  with $1/3$ as its only accumulation point. $L\cap(1/3,1/\sqrt{5}]$  is well understood thanks to the work of Markov (cf \cite{Mar1}, \cite{Mar2}) which   connects this question of Diophantine approximation to the Diophantine equation 
  \begin{equation}\label{markovn}
x^2+y^2+z^2=3xyz.
\end{equation}
  
 The set of Markov triples comprising the positive integer solutions $(x,y,z)$ of (\ref{markovn}) can be obtained starting with $(1,1,1)$, $(1,1,2)$,
   $(2,1,5)$ and then proceeding recursively going from $(x,y,z)$ to the new triples obtained by Vieta involutions $(z,y,3yz-x)$ and $(x,z,3xz-y)$.
   The Markov numbers  are the greatest coordinates of Markov triples. They form the  Markov sequence
      $$
\left\{m_i\right\}_{i=1}^\infty=\left\{1,2,5,13,29,34,89,169,194,\ldots\right\}.
$$
The   Markov number $m_i$ is  associated   to a  quadratic irrationality
$$
\theta_i=\dfrac{3m_i-2k_i+\sqrt{9m_i^2-4}}{2m_i},
$$
where $k_i$ is an integer that satisfies $a_ik_i\equiv b_i\pmod{m_i}$ and $(a_i,b_i,m_i)$ is a solution to \eqref{markovn} with $m_i$ maximal. Since $k_i$ is uniquely defined modulo $m_i$, $\theta_i$ is uniquely defined modulo 1. 
Markov showed that 
$ 
\nu(\theta_i)=\sqrt{9-4/m_i^2},
$ 
and
$ 
L\cap(1/3,1/\sqrt{5}]=\left\{\nu(\theta_i)\right\}_{i\geq 1}.
$
Moreover, any $x\in\R$ for which $\nu(x)\in L\cap(1/3,1/\sqrt{5}]$ is $\PGL$-equivalent to a Markov quadratic $\theta_i$.

Markov numbers come with a tree structure, inherited from Vieta involutions, that arranges them as below:

\begin{center}
\begin{forest}
[
{$\begin{array}{c}1\\(1,1,1)\end{array}$\quad$\begin{array}{c}2\\(1,1,2)\end{array}$}
[
{$\begin{array}{c}5\\(2,1,5)\end{array}$} 
[
{$\begin{array}{c}13\\(5,1,13)\end{array}$} 
[
{$\begin{array}{c}34\\(13,1,34)\end{array}$}
[$\ldots$]
[$\ldots$]]
[
{$\begin{array}{c}194\\(5,13,194)\end{array}$}
[$\ldots$]
[$\ldots$]
]] 
[
{$\begin{array}{c}29\\(2,5,29)\end{array}$} 
[
{$\begin{array}{c}433\\(29,5,433)\end{array}$}
[$\ldots$]
[$\ldots$]] 
[
{$\begin{array}{c}169\\(2,29,169)\end{array}$}
[$\ldots$]
[$\ldots$]]]]]
\end{forest}
\end{center}

Here $(a,b,c)$ is a solution to \eqref{markovn}.
The Markov quadratics inherit the same tree structure which  can be given in terms of their continued fractions as
%

\begin{center}\label{+tree}
\begin{forest} 
[
{$[\overline{1_2}]$\quad$[\overline{2_2}]$}
[
{$[\overline{2_2,1_2}]$} 
[
{$[\overline{2_2,1_4}]$} 
[
{$[\overline{2_2,1_6}]$}
[$\ldots$]
[$\ldots$]]
[
{$[\overline{2_2,1_2,2_2,1_4}]$}
[$\ldots$]
[$\ldots$]
]] 
[
{$[\overline{2_4,1_2}]$} 
[
{$[\overline{2_4,1_2,2_2,1_2}]$}
[$\ldots$]
[$\ldots$]] 
[
{$[\overline{2_6,1_2}]$}
[$\ldots$]
[$\ldots$]]]]]
\end{forest}
\end{center}

 where $b_n$ means that  $b$ is repeated $n$ times. We note that it is more convenient to write $[\overline{1_2}]$ instead of $[\overline{1}]$ in connection with  the conjunction operator in (\ref{conj}). The fact that all of the partial quotients of Markov quadratics are $1$ or $2$ and many other of their properties can be found in \cite{aigner}, \cite{bombieri}, \cite{Malyshev} and references therein. (See for example Corollary 1.27 in \cite{aigner}.)
 
Markov   numbers arise in many different contexts:  see  \cite{BGS}, \cite{BGS2}, \cite{GS} for some recent developments regarding the  Markov surfaces.
  \medskip
 
The main goal of this paper is to study  the values of modular functions  along the  tree associated to the Markov quadratics.

 Let $\Gamma=\PSL$. For a general quadratic irrationality $w\in\mathbb{Q}(\sqrt{D})$ and a modular function $f$ for $\Gamma $, the ``value" of $f$ at $w$ is defined in terms of the integral of $f$ along the  geodesic cycle $C_w\subset \Gamma\backslash\H $ associated to $w$. More precisely 
 
$$
f(w):= \int_{C_w}  f(z) ds
$$

where $ds$ is the hyperbolic arc length.
We can normalize the number $f(w)$ by the length of the geodesic $C_w$ and define $$
f^{nor}(w):=\dfrac{f(w)}{2\log \varepsilon_D}
$$
where $\varepsilon_D$ is the fundamental unit.(cf. section \ref{section-cycle}.)

The values of modular functions at real quadratic irrationalities   were introduced in \cite{DIT} and independently in \cite{Kaneko}. In \cite{DIT} their averages over ideal classes were shown to be coefficients of mock modular forms whereas   in \cite{Kaneko} Kaneko studied their  individual values $f^{nor}(w)$ (in the case   that the  modular function is the Klein's $j$ invariant), and based on numerical calculations he made several   interesting observations and conjectures. 

In this paper we  prove two of Kaneko's conjectures which involve  the values of modular functions at the Markov quadratics. 
Let $B$ be any branch of the Markov tree where with a branch we mean  a path on the tree without  any zigzags. Our first theorem shows that if  $w_n^B$ is the $n$-th Markov quadratic on a branch $B$ and $w_0^B$ is the predecessor of the tip of $B$ then the normalized values $f^{nor}(w^B_n),$ for any modular function $f,$   converge to the value $f^{nor}(w_0^B)$. (For more precise definitions of  the tip of a  branch and its predecessor see section \ref{section-markov}.) More precisely 

\begin{theorem}\label{convergence}
Let $f$ be a modular function defined on $\H$. For any branch $B$ of the Markov tree we have  $$
\lim
_{n\rightarrow\infty} f^{nor}(w^B_n) = f^{nor}(w^B_0).
$$
\end{theorem}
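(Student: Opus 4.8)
The plan is to reduce the statement to a statement about the geometry of closed geodesics: the cycle integral $f(w)$ depends only on the closed geodesic $C_w$, hence only on the $\Gamma$-conjugacy class of the primitive hyperbolic matrix fixing $w$, equivalently on the purely periodic part of the continued fraction expansion of $w$ (up to cyclic permutation). From the continued-fraction description of the Markov tree in the excerpt, the $n$-th quadratic $w_n^B$ along a branch $B$ has a periodic word which, up to cyclic rotation, looks like $W^{a_n} V$ for a fixed finite block $V$ (coming from the tip of $B$ and how $B$ departs from it) and a block $W \in \{2_2,\ 1_2,\ \dots\}$ whose powers $a_n \to \infty$ as $n \to \infty$ along the branch; and the predecessor $w_0^B$ of the tip has periodic part exactly the infinite-in-the-limit block, i.e. $w_0^B$ is the purely periodic quadratic $[\overline{W}]$. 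So concretely I must show that as the exponent $a \to \infty$, the normalized cycle integral of $[\overline{W^a V}]$ converges to the normalized cycle integral of $[\overline{W}]$.

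First I would set up the cycle integral over the geodesic as a sum over the continued fraction: writing the geodesic from $w'$ to $w$ (the two real endpoints) and cutting it at the $\Gamma$-translates corresponding to successive partial quotients, one gets $f(w) = \sum_{j} \int_{\gamma_j} f\,ds$ where each $\gamma_j$ is a fixed compact geodesic arc in $\Gamma\backslash\H$ determined by two consecutive "letters" of the periodic word (this is the standard unfolding used in \cite{DIT}); in particular each such arc contributes a \emph{bounded} amount $|\int_{\gamma_j} f\,ds| \le M$ since $f$ is a fixed modular function (continuous on the compact pieces involved) — here I am using that the partial quotients are bounded ($1$ or $2$) along the Markov tree, so only finitely many arc-types occur and $M$ can be taken uniform. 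The same decomposition gives $2\log\varepsilon_D = \mathrm{length}(C_w) = \sum_j \mathrm{length}(\gamma_j)$, a sum of the same number of terms, each bounded below by a fixed $\delta > 0$.

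Next, for $w_n^B = [\overline{W^{a_n} V}]$ the word has $a_n \cdot \ell(W) + \ell(V)$ letters; the block $W^{a_n}$ contributes $a_n$ copies of the same periodic arc-pattern (the one belonging to the purely periodic geodesic $C_{w_0^B}$) up to a bounded boundary error coming from the two places where $W^{a_n}$ meets $V$, and $V$ contributes a fixed bounded amount $O(1)$. Hence $f(w_n^B) = a_n \cdot f(w_0^B) + O(1)$ and $\mathrm{length}(C_{w_n^B}) = a_n \cdot \mathrm{length}(C_{w_0^B}) + O(1)$, where the implied constants depend only on $B$ and on $f$ (via $M$). Dividing, $f^{nor}(w_n^B) = \dfrac{a_n f^{nor}(w_0^B)\,\mathrm{length}(C_{w_0^B}) + O(1)}{a_n \,\mathrm{length}(C_{w_0^B}) + O(1)} \longrightarrow f^{nor}(w_0^B)$ as $a_n \to \infty$, which is exactly the claim. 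The one thing to double-check carefully is that $a_n \to \infty$ along any branch and that the "predecessor of the tip" $w_0^B$ is indeed the purely periodic quadratic attached to the repeating block — this is where I would invoke the precise combinatorics from section \ref{section-markov}.

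The main obstacle is the bookkeeping in the unfolding step: making the decomposition $f(w) = \sum_j \int_{\gamma_j} f\,ds$ rigorous with a \emph{uniform} bound $M$ on each arc, identifying the arc-types with pairs of consecutive partial quotients, and controlling the boundary error between the $W^{a_n}$ block and the $V$ block. Once that telescoping/quasi-additivity of both numerator and denominator in the exponent $a_n$ is established, the limit is immediate. A secondary point requiring care is that a "branch" (a no-zigzag path) may approach its tip from either side, so $W$ is either $2_2$ or $1_2$ depending on the branch, but the argument is symmetric in the two cases.
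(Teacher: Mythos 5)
Your overall strategy is the paper's: unfold the cycle integral into per-letter contributions along the periodic word, establish the quasi-additivity $f(w_n^B) = n\,f(w_0^B) + O(1)$ and $\mathrm{length}(C_{w_n^B}) = n\,\mathrm{length}(C_{w_0^B}) + O(1)$, and divide (this is exactly Lemma~\ref{lemma2}, Theorem~\ref{th1} and Corollary~\ref{coroth1}). But there is a genuine gap at the heart of the quasi-additivity claim, together with a supporting assertion that is false as stated. The arcs $\gamma_j$ are \emph{not} ``fixed compact geodesic arcs determined by two consecutive letters'': the $j$-th arc of $C_{w_n^B}$ depends on the entire bi-infinite word read from position $j$ (equivalently on the quadratic $w_{n,j}^B$ \emph{and} its conjugate $\tilde w_{n,j}^B$), so infinitely many distinct arcs occur, and the $a_n$ ``copies'' coming from the block $W^{a_n}$ are $a_n$ genuinely different arcs, none of which lies on $C_{w_0^B}$. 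Consequently the assertion that the $W^{a_n}$-block contributes $a_n f(w_0^B)$ ``up to a bounded boundary error'' is precisely the statement that needs proof, and your sketch supplies no mechanism for it: the uniform per-arc bound $M$ only shows that each matched pair of arc-contributions differs by $O(1)$, hence that the total deviation is $O(a_n)$ --- which, after dividing by the length $\asymp a_n$, leaves an error of size $O(1)$ in the normalized value and proves nothing.

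The missing idea is quantitative: the contribution of the $k$-th copy of $W$ must differ from the corresponding contribution to $f(w_0^B)$ by an amount exponentially small in $\min(k,a_n-k)$, because at those positions the cycle elements of $w_n^B$ and the matched cycle elements of $w_0^B$ share a number of initial partial quotients (in both the forward and the conjugate direction) growing linearly in the distance to the junction with $V$, and agreement of $r$ partial quotients forces the two real numbers to be within $O(\lambda^{r})$ of each other with $\lambda=\left(\tfrac{2}{1+\sqrt5}\right)^{2}$; summing the resulting geometric series is what gives the $O(1)$ error. This is Lemma~\ref{coincide} combined with the explicit continued-fraction bookkeeping in the proof of Theorem~\ref{th1}, and it is the actual mathematical content of the argument rather than bookkeeping to be deferred. (A secondary slip: for a general branch $B$ the repeating block $W$ is the full period of $w_0^B$, the predecessor of the tip, which is an arbitrary Markov word such as $2_2,1_4$ --- not merely $1_2$ or $2_2$; this does not affect the structure of the argument but your closing remark suggests otherwise.)
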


Our second theorem proves an eventual  monotonicity result which also partially proves the interlacing property of the values for the Markov quadratics that was conjectured by Kaneko;

\begin{theorem}\label{interlace} Let $f$ be a modular function on $\H$, let $B$ be any branch of the Markov tree. Then there exists a constant $N_{f,B}$ such that,  for all 
 $n\geq N_{f,B}$, the real and imaginary parts of $f^{nor}(w^B_{n+1})$ lie between the real and respectively  imaginary parts  of $f^{nor}(w^B_0)$ and $f^{nor}(w^B_n)$.
\end{theorem}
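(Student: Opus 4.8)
The plan is to refine the analysis behind Theorem~\ref{convergence} into an honest asymptotic expansion of $f^{nor}(w^B_n)$, from which eventual monotonicity of its real and imaginary parts — and hence the interlacing — will follow by elementary means. Fix the branch $B$. As recalled in Section~\ref{section-markov}, the Markov quadratics along $B$ have purely periodic continued fractions $w^B_n=[\overline{P\,\ell^{\,n}}]$, where $\ell$ is the period of the predecessor $w^B_0=[\overline{\ell}]$ and $P$ is a fixed word, both of even length. Writing $M_w\in\SL$ for the matrix attached to a word $w$ (the product over its letters of $\left(\begin{smallmatrix}a&1\\ 1&0\end{smallmatrix}\right)$), we have $M_{\gamma_n}=M_PM_\ell^{\,n}$ with $\det M_{\gamma_n}=1$, while $M_\ell$ has multiplier $\lambda>1$ and $\log\lambda=L:=\tfrac12\,\mathrm{len}(C_{w^B_0})$. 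Diagonalising $M_\ell$ gives the exact identity $\mathrm{tr}\,M_{\gamma_n}=\alpha\lambda^{\,n}+\beta\lambda^{-n}$ with $\alpha>0$, $\beta\in\R$ fixed, and substituting into $\lambda_n=\tfrac12\big(\mathrm{tr}\,M_{\gamma_n}+\sqrt{(\mathrm{tr}\,M_{\gamma_n})^2-4}\,\big)$ yields, again exactly,
\[
2\log\lambda_n\;=\;2Ln+2\log\alpha+\hat\delta(\lambda^{-n}),
\]
where $\lambda_n$ is the multiplier of $M_{\gamma_n}$ and $\hat\delta$ is real-analytic in a neighbourhood of $0$ with $\hat\delta(0)=0$.

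The real work is the cycle integral, where I would make quantitative the shadowing picture behind the proof of Theorem~\ref{convergence}: a period of $C_{w^B_n}$ decomposes into one bounded ``prefix'' arc attached to $n$ consecutive arcs, the $k$-th of which shadows the closed geodesic $C_{w^B_0}$ with an error decaying geometrically in $\min(k,n-k)$. Summing the shadowing arcs and controlling the prefix arc — exploiting that the endpoints of $C_{w^B_n}$ are algebraic functions of the entries of $M_{\gamma_n}$, which are affine in $\lambda^{\pm n}$ — should give
\[
\int_{C_{w^B_n}}f(z)\,ds\;=\;nI_0+J+\hat E(\lambda^{-n}),\qquad I_0:=\int_{C_{w^B_0}}f(z)\,ds,
\]
with $J:=\lim_n\big(\int_{C_{w^B_n}}f\,ds-nI_0\big)$ and $\hat E$ real-analytic near $0$, $\hat E(0)=0$. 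Dividing the two displays and putting $t=1/n$, $s=\lambda^{-n}$, we obtain
\[
\eta_n:=f^{nor}(w^B_n)-f^{nor}(w^B_0)\;=\;\Phi(t,s),\qquad \Phi(t,s):=\frac{I_0+t\big(J+\hat E(s)\big)}{2L+t\big(2\log\alpha+\hat\delta(s)\big)}-\frac{I_0}{2L},
\]
where $\Phi$ is real-analytic near $(0,0)$ and $\Phi(0,0)=0$; in particular $\eta_n\to0$, recovering Theorem~\ref{convergence}.

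It remains to see that $\Re\eta_n$ and $\Im\eta_n$ are each eventually monotone; granting this, since $\eta_n\to0$ each stays on one side of $0$ from some point on, so $\Re\eta_{n+1}$ lies between $0$ and $\Re\eta_n$ and $\Im\eta_{n+1}$ between $0$ and $\Im\eta_n$ for $n$ large, which is the assertion of the theorem. Write $h(t):=\Re\Phi(t,\lambda^{-1/t})$, so that $\Re\eta_n=h(1/n)$; it is enough to show $h$ is strictly monotone (or identically zero) on some interval $(0,\delta)$. Since $\lambda^{-1/t}$ is flat at $t=0$, $h$ extends to a $C^\infty$ function at $t=0$ with $h(0)=\Re\Phi(0,0)=0$. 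If $h$ vanishes identically near $0$ there is nothing to prove; otherwise write $\Re\Phi(t,s)=s^{j_0}\chi(t,s)$ with $\chi(\cdot,0)\not\equiv0$, so $h(t)=\lambda^{-j_0/t}\chi(t,\lambda^{-1/t})$, and by flatness $\chi(t,\lambda^{-1/t})=d\,t^{k_0}(1+o(1))$ as $t\to0^+$, where $d\,t^{k_0}$ ($d\neq0$) is the leading monomial of $\chi(\cdot,0)$ and $k_0\ge1$ when $j_0=0$ (as $h(0)=0$). Thus $h(t)=d\,\lambda^{-j_0/t}t^{k_0}(1+o(1))$; the factor $\lambda^{-j_0/t}t^{k_0}$ has derivative $\lambda^{-j_0/t}t^{k_0-2}(j_0\log\lambda+k_0t)$ of constant sign for small $t>0$ — because $(j_0,k_0)\neq(0,0)$ — and the perturbation $1+o(1)$, being an analytic-plus-flat factor, does not disturb the sign of $h'$ near $0^+$; hence $h$ is strictly monotone on some $(0,\delta')$. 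So $\Re\eta_n$ is eventually strictly monotone, and likewise $\Im\eta_n$; one takes $N_{f,B}$ beyond both thresholds.

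The main obstacle is the second displayed equation: one must show that $\int_{C_{w^B_n}}f\,ds-nI_0$ converges to $J$ not merely in the limit but at a geometric rate and, more delicately, that the correction term is a real-analytic function of $\lambda^{-n}$ (or at least admits an asymptotic expansion in powers of $\lambda^{-n}$ with controlled remainders — this is what the monotonicity argument consumes). Establishing it calls for a quantitative form of the shadowing estimate used for Theorem~\ref{convergence}, controlling precisely how the arc of $C_{w^B_n}$ over each copy of $\ell$, and over the prefix $P$, converges to its limiting arc, uniformly in the position of the block. By contrast, the corresponding statement for $2\log\lambda_n$ is an exact algebraic identity, and the degenerate cases in which the leading Taylor coefficients of $\Re\Phi$ or $\Im\Phi$ vanish cost nothing, being absorbed uniformly by the factorisation step above.
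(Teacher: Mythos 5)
Your overall architecture is sound in outline (reduce interlacing to eventual one\hyphen{}signedness and monotonicity of $\eta_n=f^{nor}(w^B_n)-f^{nor}(w^B_0)$, which together with $\eta_n\to0$ gives the sandwich), and your exact expansion of $2\log\lambda_n$ via $\mathrm{tr}(M_PM_\ell^{\,n})=\alpha\lambda^n+\beta\lambda^{-n}$ is fine. But the proof has a genuine gap at precisely the step you flag as ``the main obstacle'': the claim that $\int_{C_{w^B_n}}f\,ds-nI_0-J=\hat E(\lambda^{-n})$ with $\hat E$ \emph{real-analytic} near $0$. Nothing in the shadowing/continued-fraction estimates gives this; they give only a geometric \emph{bound} $O(\lambda^{-crn})$ on the error, not an asymptotic expansion in powers of $\lambda^{-n}$ with stabilizing coefficients. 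Worse, if one tries to extract such an expansion by summing the block-by-block errors, terms of the shape $n\lambda^{-cn}$ (i.e.\ $s^{c}/t$ in your variables) naturally appear, which are flat in $t$ but destroy the two-variable analyticity of $\Phi(t,s)$ on which your factorization $\Re\Phi(t,s)=s^{j_0}\chi(t,s)$ and the subsequent sign analysis of $h'$ entirely rest. Since the whole monotonicity argument consumes exactly this unproven structure, the proof as written does not close.

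The actual proof needs much less. Besides the expansion $f(w^B_n)=nf(w^B_0)+K+O(\delta_1)$ (and its $f\equiv1$ analogue for $\log\varepsilon^B_n$) underlying Theorem \ref{convergence}, one proves a second, \emph{consecutive-difference} estimate
\begin{equation*}
\bigl|f(w^B_{n+1})-f(w^B_n)-f(w^B_0)\bigr|\le \delta_2(n,r)\max_{z\in\C}|f(z)|,\qquad \delta_2(n,r)\to0\ \text{geometrically},
\end{equation*}
obtained by the same cycle decomposition of Lemma \ref{lemma2} applied to $w^B_{n+1}$ and $w^B_n$ directly. Substituting these two expansions, one finds that the sign of $\Re f^{nor}(w^B_n)-\Re f^{nor}(w^B_0)$ \emph{and} the sign of $\Re f^{nor}(w^B_{n+1})-\Re f^{nor}(w^B_n)$ are, for large $n$, both governed by the single $n$-independent condition $\Re f^{nor}(w^B_0)\lessgtr K_{f,B,N}/K_{1,B,M}$ (with a short separate argument for the degenerate equality case), and the two resulting inequalities point in opposite directions, which is exactly the interlacing. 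This route uses only the crude geometric error bounds you already have and completely sidesteps the analyticity of the correction term; if you want to salvage your approach, the consecutive-difference estimate is the missing lemma to prove instead.
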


The rest of the paper is organized as follows. In the next section we give the  preliminaries about  cycle integrals and  continued fractions.  In section 3, we give the basic properties of the Markov quadratics   and the Markov tree.  In section 4 and 5 we study the values of modular functions on the Markov tree and    prove Theorem \ref{convergence} and Theorem \ref{interlace} respectively.

\noindent {\bf Acknowledgements:}  We  thank M. Kaneko and the referee for numerous and very  helpful comments  that improved our exposition. 
\section{Preliminaries}

\subsection{Cycle integrals}\label{section-cycle}

Let $w$ be a real quadratic irrationality and $\tilde{w}$ be its conjugate. 
$w$ and $\tilde{w}$ are the roots of a quadratic  equation
$$
ax^2+bx+c=0\qquad (a,b,c\in\Z, \quad(a,b,c)=1)
$$ 
with discriminant $D=b^2-4ac>0$.
We
change $[a,b,c]$ to $-[a,b,c]$ if necessary and write   $w=\frac{-b+\sqrt{D}}{2a}$, $\tilde{w}=\frac{-b-\sqrt{D}}{2a}$. 
 The geodesic $S_w$ in $\H$ joining $w$ and $\tilde{w}$ is given by the equation
$$
a|z|^2+b\, \Re(z)+c=0\qquad (z\in\H).
$$
The stabilizer
$\Gamma_w$ of $w$ in $\Gamma$  preserves the quadratic form $Q_w=[a,b,c]$, and hence $S_w$. The group $\Gamma_w$ is infinite cyclic; it   corresponds to the group $U_D^2$ of units of norm one of $\mathbb{Q}(\sqrt{D})$ via the isomorphism:
\begin{equation}\label{iso}
\begin{array}{ccc}
\Gamma_w &\longrightarrow &U_D^2\\
\begin{pmatrix} a&b\\c &d\end{pmatrix} &\mapsto &(a-cw)^2.
\end{array}
\end{equation}
We denote by $A_w$ the generator of $\Gamma_w$
$$A_w=\begin{pmatrix} \frac{1}{2}(t-bu) &-cu\\au &\frac{1}{2}(t+bu)\end{pmatrix},
$$
where $(t,u)$ is the smallest positive solution to Pell's equation $t^2-Du^2=4$, and we denote by $\varepsilon$  the generator of the infinite cyclic part of $U_D$ whose  square corresponds to $A_w$ by the isomorphism \eqref{iso}.

For any modular function $f$, since the group $\Gamma_w$   preserves the expression $f(z)Q_w(z,1)^{-1}dz,$   one can define the cycle integral of  $f$ along $C_w=S_w/\Gamma_w$, also viewed as the ``value" of $f$ at $w$, by the complex number
\begin{equation}\label{cycle-integral}
f(w):=\int_{C_w} \dfrac{\sqrt{D}f(z)}{Q_w(z,1)}dz.
\end{equation}
The factor $\sqrt{D}$ is introduced here for convenience but is also natural since  the constant function $f\equiv 1$, (\ref{cycle-integral}) gives the length of the geodesic $C_w.$
The integral defining $f(w)$   is $\Gamma$-invariant and can in fact be taken along any path in $\H$  from $z_0$ to $A_w^{-1}
 z_0$, where $z_0$ is any point  in $\H$.  
Note that this gives an orientation on  $S_w$ from $w$ to $\tilde{w}$,  which is  counterclockwise if $a>0$ and clockwise if $a<0$. 
We  normalize the number $f(w)$ by the length of the geodesic $C_w$ which is given by
$$
\int_{ C_w} \dfrac{\sqrt{D}}{Q_w(z,1)}dz=2\log \varepsilon
$$
  and we define the normalized value as 
$$
f^{nor}(w):=\dfrac{f(w)}{2\log \varepsilon}.
$$

\bigskip

\subsection{The `+' and `--' continued fractions}
 
 \smallskip

Let  $(b_0,b_1,b_2,\ldots)$  denote the `--' continued fraction
$$
(b_0,b_1,b_2,\ldots)=b_0-\dfrac{1}{b_1-\dfrac{1}{b_2-\dfrac{1}{\ddots}}}
$$
and   $[a_0,a_1,a_2,\ldots]$  be the `+' continued fraction
$$
[a_0,a_1,a_2,\ldots]=a_0+\dfrac{1}{a_1+\dfrac{1}{a_2+\dfrac{1}{\ddots}}}.
$$
Every real number $w$ has a `--' continued fraction expansion $w=(b_0,b_1,b_2,\ldots)$ with $b_i\in\Z$ and $b_i \geq 2$ for $i\geq 1$ and  a unique `+' continued fraction expansion $w=[a_0,a_1,a_2,\ldots]$ with $a_i\in\Z$ and $a_i \geq 1$ for $i\geq 1$.
The `--' continued fraction expansion of $w$ is obtained by setting $w_0=w$ and inductively $b_i=\ceil{w_i}$,  $w_{i+1}=\frac{1}{b_i-w_i}=ST^{-b_i}(w_i)$ , where $S(x)=-1/x$ and $T(x)=x+1$.
The `+' continued fraction expansion is obtained by setting  $a_i=\floor{w_i}$, $w_{i+1}=\frac{1}{w_i-a_i}=\varepsilon T^{-a_i}(w_i)$, where $\varepsilon(x)=1/x$. Hence the `--'  continued fraction is given by transformations of $\Gamma$ on the real line, whereas the `+' continued fraction corresponds to transformations of $\GL$. 
 To go from the `+' to the `--' continued fraction expansions, the general rule is 
\begin{equation}\label{+ to -}
[a_0,a_1,a_2,\ldots]\longrightarrow(a_0+1,\underbrace{2,\ldots,2}_{a_1-1},a_2+2,\underbrace{2,\ldots,2}_{a_3-1},a_4+2,\ldots).
\end{equation}


It is well known that a real number $w$ is a quadratic irrationality if and only if its `--' continued fraction expansion (or equivalently, its `+' continued fraction) is eventually periodic: $w=(b_0,b_1,\ldots,b_k,\overline{b_{k+1},\ldots,b_{k+r}})$   where the line over $b_{k+1},\ldots,b_{k+r}$ denotes the period.  We say that $w$ is \textit{purely periodic} when all the partial quotients  repeat.
It will be useful for the rest of the paper to remember the following statements:
\begin{enumerate}[(I)]
\item two quadratic irrationalities have the same `--' period if and only if they are $\Gamma$-equivalent;\\
\item $w$ has a purely periodic `--' continued fraction expansion if and only if $0<\tilde{w}<1<w$, where $\tilde{w}$ is  the conjugate of $w$;\\
\item if $w=(\overline{b_0,\ldots,b_r})$, then $\frac{1}{\tilde{w}}=(\overline{b_r,\ldots,b_0})$.
\end{enumerate}
 \smallskip
 
These statements and  more information about negative continued fractions can be found in  \cite[p. 126 ff]{Zagier-book}.

The following lemma gives an upper bound for the distance between two real numbers in terms of the number of first partial quotients for which they coincide.

\begin{lemma}\label{coincide}
If the `--' continued fraction expansions of $u$ and $v$ coincide in the first $r+1$ partial quotients and their `+' continued fraction expansions have only $1$'s and $2$'s, then 
$$
|u-v|\leq 10\left(\dfrac{2}{1+\sqrt{5}}\right)^{2r}.
$$
\end{lemma}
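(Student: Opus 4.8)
The plan is to reduce the estimate to a lower bound on continued–fraction denominators. Suppose the `--' expansions of $u$ and $v$ agree in $b_0,\dots,b_r$, and write $u=(b_0,\dots,b_r,u')$, $v=(b_0,\dots,b_r,v')$, where $u'$ and $v'$ are the respective `--' tails beyond $b_r$. One step of a `--' expansion is the M\"obius action of $\left(\begin{smallmatrix}b&-1\\1&0\end{smallmatrix}\right)$, so $u=M(u')$ and $v=M(v')$ with the \emph{same} matrix
$$
M=\prod_{i=0}^{r}\left(\begin{smallmatrix}b_i&-1\\1&0\end{smallmatrix}\right)=\left(\begin{smallmatrix}P_r&-P_{r-1}\\ Q_r&-Q_{r-1}\end{smallmatrix}\right),\qquad Q_{-1}=0,\ Q_0=1,\ Q_j=b_jQ_{j-1}-Q_{j-2},
$$
and $\det M=1$. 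The standard identity $g(x)-g(y)=\frac{(\det g)(x-y)}{(\gamma x+\delta)(\gamma y+\delta)}$ for $g=\left(\begin{smallmatrix}\alpha&\beta\\ \gamma&\delta\end{smallmatrix}\right)$ then gives
$$
|u-v|=\frac{|u'-v'|}{\,|Q_ru'-Q_{r-1}|\cdot|Q_rv'-Q_{r-1}|\,}.
$$
So it suffices to bound $|u'-v'|$ from above and $|Q_ru'-Q_{r-1}|$ from below.

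For the numerator, the translation rule \eqref{+ to -} from a `+' expansion with partial quotients in $\{1,2\}$ shows that every $b_i$ lies in $\{2,3,4\}$ and that, for $i\ge 1$, one never has $b_i=b_{i+1}=2$ (a block of $2$'s has length $a_{\mathrm{odd}}-1\le 1$). Since any tail $(c_0,c_1,\dots)$ of a `--' expansion with all $c_i\ge 2$ satisfies $1\le (c_0,c_1,\dots)<c_0$ by an easy downward induction on truncations, we get $u',v'\in[1,4)$, hence $|u'-v'|<3$; moreover, using that a $2$ in position $\ge 2$ is followed by a term $\ge 3$, one checks $u',v'\ge 3/2$ when $r\ge 1$.

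For the denominator one cannot just use $|Q_ru'-Q_{r-1}|\ge Q_r-Q_{r-1}$ (this grows only linearly in the extreme case where all $b_i=2$, which the hypothesis forbids). Instead, for $r\ge 1$ a short case split handles both the denominator bound and the growth of $Q_r$. Writing $\rho_j=Q_j/Q_{j-1}\ge 1$, one has: if $b_r\ge 3$ then $\rho_r\ge 2$, so $Q_{r-1}\le Q_r/2$ and $|Q_ru'-Q_{r-1}|\ge \tfrac32Q_r-\tfrac12Q_r=Q_r$; if $b_r=2$ then $b_{r+1}\ge 3$, so $u'\ge 2$ and $|Q_ru'-Q_{r-1}|\ge 2Q_r-Q_{r-1}>Q_r$. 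Thus $|Q_ru'-Q_{r-1}|\ge Q_r$ for $r\ge1$. The same structural input gives exponential growth: splitting again on whether $b_{r+1}=2$ or $b_{r+1}\ge3$ (and using $b_r\ge3$, hence $Q_{r-1}\le Q_r/2$, in the former case) one gets $Q_{r+2}\ge\phi^2Q_r$ for all $r\ge 1$, where $\phi=\tfrac{1+\sqrt5}{2}$; since $Q_1\ge 2$ and $Q_2=b_1b_2-1\ge 5$, induction yields $Q_r\ge\tfrac2\phi\,\phi^r$ for $r\ge1$. As $\tfrac{2}{1+\sqrt5}=\phi^{-1}$, we conclude $|u-v|\le 3/Q_r^2\le \tfrac{3\phi^2}{4}\,\phi^{-2r}<10\bigl(\tfrac{2}{1+\sqrt5}\bigr)^{2r}$ for $r\ge1$; and for $r=0$ the identity above gives $|u-v|=|u'-v'|/(u'v')<3$ directly.

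The only real work is the exponential-growth step: one has to exploit the combinatorics of Markov `--' expansions (bounded partial quotients, and $2$'s isolated away from the start) rather than the bare inequality $b_i\ge 2$. A minor point to watch is the exceptional configuration $b_0=b_1=2$, the one place two $2$'s can be adjacent; it is harmless here because the estimates above are applied only at positions $\ge 1$ (when $r\ge 1$) and the case $r=0$ is treated separately.
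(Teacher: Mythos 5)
Your proof is correct, but it takes a genuinely different route from the paper. The paper's proof is much shorter: it uses the translation rule \eqref{+ to -} to observe that agreement in the first $r+1$ `--' partial quotients forces agreement in the first $r+1$ `+' partial quotients (this is where the $1$--$2$ hypothesis enters, since otherwise a long block of $2$'s could leave the corresponding `+' quotient undetermined), so that $p/q=[a_0,\ldots,a_r]$ is a common convergent of $u$ and $v$; the bound then follows from the standard estimates $|u-p/q|,|v-p/q|\le 1/q^2$ and the Fibonacci lower bound $q\ge \frac{1}{\sqrt5}\bigl(\frac{1+\sqrt5}{2}\bigr)^r$, which hold for arbitrary `+' expansions. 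You instead stay entirely in the `--' world: you factor $u=M(u')$, $v=M(v')$ through the common $\mathrm{SL}_2$ matrix built from $b_0,\ldots,b_r$, and then must supply by hand the exponential growth of the `--' denominators $Q_r$, which is where you use the $1$--$2$ hypothesis (via the fact that, away from position $0$, the `--' quotients lie in $\{2,3,4\}$ with no two adjacent $2$'s). I checked the details: the M\"obius difference identity, the tail bounds $u',v'\in(1,4)$ and $u',v'>3/2$ for $r\ge1$, the case split giving $|Q_ru'-Q_{r-1}|\ge Q_r$, the step $Q_{r+2}\ge\phi^2Q_r$ with base cases $Q_1\ge2$, $Q_2\ge5$, and the separate treatment of $r=0$ and of the exceptional configuration $b_0=b_1=2$ are all sound. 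Your argument is longer and the denominator-growth lemma costs real combinatorial work that the paper avoids by switching to `+' convergents, but it buys a sharper constant (about $2$ in place of $10$) and makes explicit exactly which structural feature of these expansions prevents the worst case $b_i\equiv 2$, a point the paper's one-line appeal to the Fibonacci bound hides.
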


\begin{proof}
Let $u$ and $v$ be as in the statement of the lemma. Then one can see, by applying the rule \eqref{+ to -}, that also the `+' continued fraction expansions of $u$ and $v$ coincide in the first $r+1$ partial quotients.
Hence, if we set $a_0,\ldots, a_r$ to be those partial quotients, the rational number $\frac{p}{q}=[a_0,\ldots,a_r]$ is a convergent of both  $u$ and $v$. Then it is well known that
$$
\left|u-\dfrac{p}{q}\right|\leq\dfrac{1}{q^2},\qquad  \left|v-\dfrac{p}{q}\right|\leq\dfrac{1}{q^2}
$$
and
$$
q\geq  \frac{1}{\sqrt{5}}\left(\dfrac{1+\sqrt{5}}{2}\right)^{r}.
$$

Therefore,
$$
|u-v|\leq \left|u-\dfrac{p}{q}\right| + \left|v-\dfrac{p}{q}\right| \leq 10\left(\dfrac{2}{1+\sqrt{5}}\right)^{2r}.
$$
\end{proof}

\section{Markov Tree}


\subsection{Markov's quadratics}\label{section-markov}

Let $
\left\{m_i\right\}_{i=1}^\infty=\left\{1,2,5,13,29,34,89,169,194,\ldots\right\}
$ be the set of Markov numbers. 
As in the introduction, for each Markov number $m_i$, we let  
 $
 \theta_i=\dfrac{3m_i-2k_i+\sqrt{9m_i^2-4}}{2m_i}
$ be the Markov quadratic 
where $k_i$ is an integer that satisfies $a_ik_i\equiv b_i\pmod{m_i}$ and $(a_i,b_i,m_i)$ is a solution to \eqref{markovn} with $m_i$ maximal. Changing the representative for $k_i \mod m_i$ does not chnage the $\Gamma$ orbit of $\theta_i.$
%
In Markov's theory, only $\PGL$-equivalence classes are relevant, which implies that the order of $(a_i,b_i)$ does not matter. Since we need $\Gamma$-equivalence, which distinguishes non-real $f(\theta_i)$ and its conjugate, here the order of $(a_i,b_i)$ becomes relevant. We fix it so that  $\Im(f(w))>0$.

The Markov tree $\mathcal T$ associated to the Markov quadratics given in the introduction is   in terms of the `+' continued fractions. 
Since the 
 cycle integrals  are $\Gamma$ and not $\PGL$ invariant, we will rather work with the `--' continued fraction. 
 By following the rule \eqref{+ to -},
  the Markov tree $\mathcal T$ becomes in  the `--' continued fraction:
\begin{center}
\begin{forest}
[
\quad{$(2,\overline{3})$\quad$(3,\overline{2,4})$}
[
{$(3,\overline{2,3,4})$} 
[
{$(3,\overline{2,3_2,4})$} 
[
{$(3,\overline{2,3_3,4})$}
[$\ldots$]
[$\ldots$]]
[
{$(3,\overline{2,3,4,2,3_2,4})$}
[$\ldots$]
[$\ldots$]
]] 
[
{$(3,\overline{2,4,2,3,4})$} 
[
{$(3,\overline{2,4,(2,3,4)_2})$}
[$\ldots$]
[$\ldots$]] 
[
{$(3,\overline{(2,4)_2,2,3,4})$}
[$\ldots$]
[$\ldots$]]]]]
\end{forest}
\end{center}

Note that each branch (a path with no zigzags) in the tree $\mathcal T$ comes with a left or right orientation. 
We call a branch a left (right) branch if starting from its first vertex  on the top and going downwards the branch leans towards left (right).  
Since no zigzag paths are allowed, each branch has a unique orientation. For example, the branch with the quadratics $(3,\overline{2,3,4}),
(3,\overline{2,3_2,4}),(3,\overline{2,3_3,4})$ is a left branch, whereas the branch with $(3,\overline{2,3,4}),(3,\overline{2,4,2,3,4}),
(3,\overline{(2,4)_2,2,3,4})$ is a right branch.
We call the first vertex at  the top of any branch its tip. 
Except for the two singular cases of $(2,\overline{3})$ and  $(3,\overline{2,4})$, each Markov number lies both on a right and a left branch but it is the tip of only a left or a right branch, except for $(3,\overline{2,3,4})$ which is the tip of both the leftmost and the rightmost branches.

%

In the case of  `+' continued fractions  we consider a 
  \textit{conjunction operation} of two periods as
\begin{equation}\label{conj}
[\overline{s_0,\ldots,s_n}]\odot[\overline{t_0,\ldots,t_m}]=[\overline{s_0,\ldots,s_n,t_0,\ldots,t_m}].
\end{equation}
All Markov quadratics can be constructed by using this operation, starting with $[\overline{1}_2]$ and $[\overline{2}_2]$. Indeed, 
each Markov quadratic is the result of the conjunction operation of its predecessor on the same branch and the predecessor of the tip of the branch.

%
%
 
For the `--' continued fraction, 
the rule is also the conjunction of periods except for the most left branch, where the $n$-th Markov quadratic is $(3,\overline{2,3_n,4})$.
Indeed, let $x=[\overline{s_0,\ldots,s_n}]=(b_0,\overline{b_1,\ldots,b_k})$ and $y=[\overline{t_0,\ldots,t_m}]=(c_0,\overline{c_1,\ldots,c_\l})$. For any branch different from the   right most branch,
 by applying  \eqref{+ to -} together with  the observation that $s_n=t_m=1$ are in odd positions, so they do not contribute in the `--' expansion, we obtain 
$$
x\odot y=(b_0,\overline{b_1,\ldots,b_{k-1},t_0+2,c_1,\ldots,c_{\l-1},s_0+2}).
$$
But $t_0$ is equal to $1$ on the   left most branch and $2$ on any other branch, and $s_0=2$. For the   right most branch, \eqref{+ to -} also gives
$$
x\odot y=(b_0,\overline{b_1,\ldots,b_{k-1},4,c_1,\ldots,c_{\l-1},s_0+2})
$$
and $s_0=2$.

Throughout the paper, we denote by $w^B_n$ ($n\geq 1$) the $n$-th Markov quadratic on a branch $B$ of the tree and  $w_0^B $ the left (right) predecessor of the tip $w^B_1$ of $B$ if $B$ is a left (right) branch.  For example, if $B=L$ is the left most branch, then $w_0^L= (2,\overline{3})$, $w^L_1=(3,\overline{2,3,4})$, $w^L_2=(3,\overline{2,3_2,4})$, $w^L_3=(3,\overline{2,3_3,4})$, etc. If $B=R$ is the right most branch, then $w_0^R= (3,\overline{2,4})$, $w^R_1=(3,\overline{2,3,4})$, $w^R_2=(3,\overline{2,4,2,3,4})$, $w^R_3=(3,\overline{(2,4)_2,2,3,4})$, etc.

The $n$-th Markov quadratic on a left branch  $B\neq L$ can  be written as:
\begin{equation}\label{nth Markov}
w^B_n=(3,\overline{a_1,\ldots,a_s,(b_1,\dots,b_r)_n}),
\end{equation}
where $w_0^B=(3,\overline{b_1,\dots,b_r})$ and $a_1,...,a_s$ depend only on $B$. On a right branch $B$, we have
\begin{equation}\label{rightbranch}
w^B_n=(3,\overline{(b_1,\dots,b_r)_{n-1},a_1,\ldots,a_s}),
\end{equation}
and on the leftmost branch $L$ we have
\begin{equation}\label{leftmostbranch}
w^L_n=(3,\overline{2,3_n,4}).
\end{equation}

\begin{rk} The left most branch in the Markov tree is also called the Fibonacci branch since the associated  Markov numbers on this branch are the odd indexed Fibonacci numbers.
Similarly the right most branch is associated with the odd indexed Pell numbers  which are defined  by the recurrence $P_0=0, P_1=1$ and $P_{n+1}=2P_n+P_{n-1}.$ (cf. \cite{aigner} p. 49)\end{rk}


\subsection{The cycle of quadratics of a  Markov number} 

For    any quadratic irrationality $w,$  it is known that   the hyperbolic element $A_w$   is conjugate to a word in $T$ and $V$,
where 
$$
T=\begin{pmatrix}1 &1\\0 &1\end{pmatrix},\qquad V=\begin{pmatrix}1 &0\\1 &1\end{pmatrix}.
$$
If in particular $w=w^B_n$ is a quadratic on $\mathcal T$ ($n\geq 0$), then
the associated hyperbolic element $A_{w^B_n}$ can be written as a word in $T$ and $V$.
More specifically, $A_{w^B_n}=A_0^{-1} \ldots   A_{\l_n}^{-1}$, where $A_0=I$ and $A_i\in\{T^{-1},V^{-1}\}$ for $1\leq i\leq \l_n$ are given by  the  algorithm: 
$$
w^B_{n,0}=w^B_n,\qquad
 w^B_{n,i+1}=A_{i+1}(w^B_{n,i})\qquad (i\geq 0),
$$
where
$$
 A_{i+1}=\left\{\begin{array}{ll}
T^{-1} &\mbox{if $\floor{w^B_{n,i}}\geq 1 $},\\
V^{-1} &\mbox{otherwise}.
\end{array}\right.
$$
Hence
\begin{equation}\label{cycle}
w^B_{n,i}=A_i \ldots  A_0 w^B_{n},\qquad i=0,\ldots, \l_n,
\end{equation}
and $\l_n$ is the length of the word $A_{w^B_n}$, or equivalently, the length of the cycle of quadratics $\{w^B_{n,i}\}_i$ of $w^B_n$.  As the following example demonstrates   this procedure applied to a Markov quadratic in fact cycles back and hence terminates.

\begin{example}
For example,  the cycle of $w^L_1=(3,\overline{2,3,4})$ on the leftmost branch is:
$$
\begin{array}{ll}
w^L_{1,0}=(3,\overline{2,3,4})\\\\
w^L_{1,1}=T^{-1}(w^L_{1,0})=(2,\overline{2,3,4})\\\\
w^L_{1,2}=T^{-1}(w^L_{1,1})=(1,\overline{2,3,4})\\\\
w^L_{1,3}=V^{-1}(w^L_{1,2})=(1,\overline{3,4,2})\\\\
w^L_{1,4}=V^{-1}(w^L_{1,3})=(2,\overline{4,2,3})\\\\
w^L_{1,5}=T^{-1}(w^L_{1,4})=(1,\overline{4,2,3})\\\\
w^L_{1,6}=V^{-1}(w^L_{1,5})=(3,\overline{2,3,4})=w^L_{1,0}.
\end{array}
$$
The length is $\l_1=6$ and
$A_{w^L_1}=ITTVVTV.
$
 
\end{example}

From now on  we restrict to a left branch but not the leftmost branch. All the following arguments apply  in the same way if $B$ is a right branch or $B=L$, the leftmost branch. The small difference in the arguments arise due to the different conjunction operations necessary, which are given in (\ref{rightbranch}) for the right and in (\ref{leftmostbranch}) for   the leftmost branches. 

We  now consider $w^B_n$, in a left branch $B\neq L$, written as in \eqref{nth Markov}. Then
\begin{equation}\label{eqn-ell}
\l_n= n\l_0 + \sum_{i=1}^s (a_i-1),
\end{equation}
where
$$\l_0=\sum_{i=1}^r (b_i-1)$$
is the length of the cycle of $w^B_0$.
The number of partial quotients in the period of $w_1^B$ is $s+r$ and the conjunction operation ensures that  this  is $\leq 2r$. Hence  $s\leq r$ and since $a_i\leq 4$, we have 
  
 \begin{equation}\label{length}
 \l_n \leq 3r (n+1).
 \end{equation}

 It is convenient to set 

$$a=\sum_{i=1}^s(a_i-1)$$
and
$$
 \p=(b_1,\ldots,b_r),\qquad\p_k=(b_1,\ldots,b_r)_k,\qquad \q_k=(b_r,\ldots,b_1)_k,
 $$
where the subindex $k$ means that the continued fraction is repeated $k$ times.
 With these notations, the cycle of $w^B_n$  is of the form:
$$
\begin{array}{rl}
w^B_{n,0}=&(3,\overline{a_1,\ldots,a_s,\p_n}),\\\\
w^B_{n,1}=&(2,\overline{a_1,\ldots,a_s,\p_n}),\\\\
w^B_{n,2}=&(1,\overline{a_1,\ldots,a_s,\p_n}),\\\\
w^B_{n,3}=&(a_1-1,\overline{a_2,\ldots,a_s,\p_n,a_1}),\\
&\qquad\qquad\vdots\\
w^B_{n,a}=&(3,\overline{\p_n,a_1,\ldots,a_s}),\\
&\qquad\qquad\vdots\\
w^B_{n,a+\l_0}=&(3,\overline{\p_{n-1},a_1,\ldots,a_s,\p}),\\
&\qquad\qquad\vdots\\
w^B_{n,a+n\l_0}=&(3,\overline{a_1,\ldots,a_s,\p_n})=w^B_{n,0}.
\end{array}
$$

\begin{rk}\label{rk2}

 One can easily write the continued fraction expansion for  the Galois conjugate $-\tilde w^B_{n,i}$ of $-w^B_{n,i}$ in terms of that of $w^B_{n,i}$.  Indeed, let $(d_0,\overline{d_1,\ldots,d_m})$ be the continued fraction expansion of $w^B_{n,i}$. The  quadratic $ST^{-d_0}(w^B_{n,i})$ is purely periodic with continued fraction $(\overline{d_1,\ldots,d_m})$ so, by  the property (III),  its Galois conjugate is $1/(\overline{d_m,\ldots,d_1})$. Therefore, 
$$
\tilde w^B_{n,i}=T^{d_0}S(1/(\overline{d_m,\ldots,d_1}))=-(d_m-d_0,\overline{d_{m-1},\ldots,d_1,d_m}).
$$

  \end{rk}

\section{Convergence property}

In this section we study the values of a modular function on the Markov tree. Let $B$ be any branch of the tree and  $w^B_n$ be the $n$-th Markov quadratic on $B$.  Let  $A_{w^B_n}=A_0^{-1} \ldots   A_{\l_n}^{-1}$, where $A_0=I$ and $A_i\in\{T^{-1},V^{-1}\}$ for $1\leq i\leq \l_n.$ Let $\rho=e^{\pi i/3}$ and $z_i=A_0^{-1}\ldots A_i^{-1}\rho^2$.  Then using the modularity of $f$ we have 
\begin{align*}
f(w^B_n)&= -\sqrt{D} \sum_{i=0}^{\l_n-1} \int_{z_{i}}^{z_{i+1}} \dfrac{f(z)}{Q_{w^B_n}(z,1)}\, dz\\
&=- \sqrt{D} \sum_{i=0}^{\l_n-1} \int_{\rho^2}^{A^{-1}_{i+1} \rho^2} \dfrac{f(z)}{(Q_{w^B_n}|A_0^{-1} \ldots A_i^{-1})(z,1)}\, dz,\\
&= -\sum_{i=0}^{\l_n-1}\int_{\rho^2}^{A^{-1}_{i+1} \rho^2}f(z)\left(\dfrac{1}{z-w^B_{n,i}}-\dfrac{1}{z-\tilde w^B_{n,i}}\right) dz.
\end{align*}
 Since $V(\rho^2)=T(\rho^2)=\rho$,
  we obtain: 

\begin{lemma}\label{lemma2} For $n\geq 0$ we have
\begin{equation}\label{**}
f(w^B_n)= \int_{\rho}^{\rho^2} \sum_{i=0}^{\l_n-1} f(z)\left(\dfrac{1}{z-w^B_{n,i}}-\dfrac{1}{z-\tilde w^B_{n,i}}\right) dz.
\end{equation}
\end{lemma}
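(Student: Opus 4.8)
The plan is to justify the chain of equalities displayed just above the statement and then to finish with one elementary observation about $\rho$. \textbf{Step 1 (unfold the cycle integral).} I would start from the definition \eqref{cycle-integral} and use the fact recalled in Section~\ref{section-cycle} that the integral may be taken along any path in $\H$ joining a base point $z_0$ to its image under the hyperbolic generator. Choosing $z_0=\rho^2$ and the polygonal path through the points $z_i=A_0^{-1}\cdots A_i^{-1}\rho^2$ for $i=0,\dots,\l_n$ is legitimate because $A_0=I$ gives $z_0=\rho^2$ and $A_{w^B_n}=A_0^{-1}\cdots A_{\l_n}^{-1}$ gives $z_{\l_n}=A_{w^B_n}\rho^2$; this splits $f(w^B_n)$ into the $\l_n$ integrals over the segments $[z_i,z_{i+1}]$ that open the displayed computation.

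\textbf{Step 2 (change of variable on each segment).} Since $[z_i,z_{i+1}]=(A_0^{-1}\cdots A_i^{-1})[\rho^2,\rho]$, on the $i$-th segment I would substitute $z\mapsto(A_0^{-1}\cdots A_i^{-1})z$. Modularity gives $f(\gamma z)=f(z)$, and the one-form $\sqrt{D}\,Q_{w^B_n}(z,1)^{-1}dz$ is carried by $\gamma=A_0^{-1}\cdots A_i^{-1}$ to $\sqrt{D}\,(Q_{w^B_n}|\gamma)(z,1)^{-1}dz$. Because the roots of $Q_{w^B_n}|\gamma$ are $\gamma^{-1}w^B_n=w^B_{n,i}$ and $\gamma^{-1}\tilde w^B_n=\tilde w^B_{n,i}$ — which is exactly \eqref{cycle} — and $\sqrt{D}\,Q_w(z,1)^{-1}=(z-w)^{-1}-(z-\tilde w)^{-1}$, from $Q_w(z,1)=a(z-w)(z-\tilde w)$ together with $w-\tilde w=\sqrt{D}/a$, each segment integral becomes $-\int_{\rho^2}^{A_{i+1}^{-1}\rho^2}f(z)\big((z-w^B_{n,i})^{-1}-(z-\tilde w^B_{n,i})^{-1}\big)\,dz$, i.e. the last line of the displayed computation. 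The one thing to track carefully is the sign and orientation coming from the choice of $A_{w^B_n}$ versus $A_{w^B_n}^{-1}$ and from the factor $-\sqrt{D}$, but this is pure bookkeeping.

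\textbf{Step 3 (the decisive observation and conclusion).} Each $A_{i+1}$ is $T^{-1}$ or $V^{-1}$, so $A_{i+1}^{-1}$ is $T$ or $V$; and $T(\rho^2)=\rho^2+1=\rho$ while $V(\rho^2)=\rho^2/(\rho^2+1)=\rho^2/\rho=\rho$. Hence $A_{i+1}^{-1}\rho^2=\rho$ for every $i$, so all $\l_n$ segment integrals run over the same path from $\rho^2$ to $\rho$. I can therefore interchange the finite sum with the integral and reverse the orientation to absorb the minus sign, which yields precisely \eqref{**}. I do not expect a genuine obstacle: this is a computational lemma whose entire content is the unfolding of the cycle integral along the $T,V$-word representing $A_{w^B_n}$, and the only mild care needed is the transformation law of $f(z)Q_w(z,1)^{-1}dz$ under $\Gamma$ and the signs; everything else is immediate once one notes that $T$ and $V$ both send $\rho^2$ to $\rho$.
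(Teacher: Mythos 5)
Your proposal is correct and follows essentially the same route as the paper: the paper's proof is exactly the displayed unfolding of the cycle integral along the polygonal path through the points $z_i=A_0^{-1}\cdots A_i^{-1}\rho^2$, using modularity, the transformation of $\sqrt{D}\,Q_{w}(z,1)^{-1}dz$ under $\Gamma$ together with \eqref{cycle}, and finally the observation $V(\rho^2)=T(\rho^2)=\rho$ to collapse all segments onto the single arc from $\rho$ to $\rho^2$. Your sign bookkeeping (the minus sign absorbed by reversing orientation) matches the paper's computation.
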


Lemma \ref{lemma2} is the main tool we use to estimate the values of modular functions at real quadratic irrationalities.

Throughout the paper, we denote by  $\C$  the arc of circle  joining $\rho^2$ and $\rho$. 
We denote by $\varepsilon^B_n$ the image of $A_{w_n}^B$ under the isomorphism \eqref{iso}, so the length of $C_{w_n^B}$ equals $2\log \varepsilon^B_n$.

Our first goal is to show that the normalized values $f^{nor}(w^B_n)$ for any modular function $f$ along any branch $B$ converge to the value $f^{nor}(w_0^B)$.  
 We call this property   `convergence property' and prove it  in this section. The main idea of  the proofs is to divide the sum in Lemma~\ref{lemma2} into several ranges and bound each piece making repeated use of Lemma~\ref{coincide}.
For simplicity of  the notation, as mentioned before,  we restrict to a left but not the leftmost branch. 
However, the argument in the proof of Theorem \ref{th1} applies in the same way if $B$ is a right branch or $B=L$. Only the bound $\delta_1(r,N)$ will be slightly modified but will still be of the form $O(rN\lambda^{rN})$ where $\lambda=\left(\frac{2}{1+\sqrt{5}}\right)^2.$ Hence Corollary  4.4 also remains true for any branch.

\begin{theorem}\label{th1} Let $f$ be a modular function,  $B$ be any left branch $\neq L$ of the Markov tree $\T$  and $N\geq 1$. There  exists a complex number $K=K_{f,B,N}$ such that for all $n\geq N$,
\begin{equation}\label{th1eq1}
|f(w^B_n)-n f(w^B_0)-K| \leq \delta_1(r,N)\max_{z\in\C}|f(z)|,
\end{equation}
where 
\begin{equation}\label{delta1}
\delta_1(r,N)=\dfrac{80\pi}{3}(2+r(N+1)) \left(\frac{2}{1+\sqrt{5}}\right)^{2(rN-1)}
\end{equation}
and $r+1$ is the number of partial quotients in the period  of $w^B_0$.
\end{theorem}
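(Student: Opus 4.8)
The plan is to use Lemma~\ref{lemma2} to write $f(w^B_n)$ as an integral over the fixed arc $\mathcal C$ of the kernel $\sum_{i=0}^{\l_n-1} f(z)\bigl(\frac{1}{z-w^B_{n,i}}-\frac{1}{z-\tilde w^B_{n,i}}\bigr)$, and similarly for $f(w^B_0)$, and then to compare the two kernels term by term. The key structural fact, visible from the explicit list of the cycle $\{w^B_{n,i}\}_i$ displayed at the end of Section~3, is that the cycle of $w^B_n$ decomposes into an ``$a$-part'' of length $a=\sum_{i=1}^s(a_i-1)$ coming from the fixed tail $a_1,\dots,a_s$, followed by $n$ successive ``$\l_0$-blocks'' each of which is a copy of the cycle of $w^B_0$ but with the period $\p$ replaced by $\p_n,\p_{n-1},\dots,\p_1$ respectively; i.e. $w^B_{n,a+k\l_0+j}$ agrees with the corresponding quadratic $w^B_{0,j}$ of the base cycle in a long initial segment of its $-$-continued fraction. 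First I would make this precise: for the $k$-th block ($0\le k\le n-1$) and $0\le j<\l_0$, the $-$-expansions of $w^B_{n,a+k\l_0+j}$ and of $w^B_{0,j}$ coincide in roughly $r(n-k)$ partial quotients, and likewise for the Galois conjugates $\tilde w^B_{n,i}$ using Remark~\ref{rk2}; Lemma~\ref{coincide} then bounds $|w^B_{n,a+k\l_0+j}-w^B_{0,j}|$ and $|\tilde w^B_{n,a+k\l_0+j}-\tilde w^B_{0,j}|$ by $10\lambda^{r(n-k)}$ with $\lambda=\bigl(\frac{2}{1+\sqrt5}\bigr)^2$.

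Next, I would define $K=K_{f,B,N}$. The natural candidate is the ``limit kernel'' integral: the $a$-part contributes a fixed quantity, and each of the infinitely many blocks, as $k\to\infty$, has its quadratics converging to the corresponding quadratics of a purely periodic limit (the relevant limit is exactly $w^B_0$, whose $-$-expansion is purely periodic with period $b_1,\dots,b_r$). Concretely, set
\[
K=\int_{\rho}^{\rho^2}\Bigl(\text{($a$-part kernel)}+\sum_{k=0}^{\infty}\Bigl[\text{($k$-th block kernel for }w^B_\infty)-\text{(block kernel for }w^B_0)\Bigr]\Bigr)f(z)\,dz+\bigl(\text{correction so that } n\,f(w^B_0)\text{ is split off}\bigr),
\]
or, more cleanly, let $K$ be whatever constant makes $f(w^B_n)-n\,f(w^B_0)$ converge; the existence of such a constant is what the telescoping below establishes, and one then checks it is independent of $n\ge N$. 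The point is that $n\,f(w^B_0)$ accounts for $n$ copies of the base-cycle kernel, and the difference $f(w^B_n)-n\,f(w^B_0)-K$ is a sum, over the $a$-part and over the $n$ blocks, of kernel differences each controlled by the continued-fraction agreement above.

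The estimate then proceeds by bounding, for each block index $k$, the difference between the $k$-th block kernel of $w^B_n$ and the base kernel of $w^B_0$. On $\mathcal C$ the denominators $z-w$ and $z-\tilde w$ are bounded below (the quadratics involved are real with partial quotients $1$ or $2$, hence bounded, while $\mathcal C$ stays at positive distance from $\mathbb R$), so
\[
\Bigl|\frac{1}{z-w^B_{n,i}}-\frac{1}{z-w^B_{0,j}}\Bigr|\le \frac{|w^B_{n,i}-w^B_{0,j}|}{|z-w^B_{n,i}|\,|z-w^B_{0,j}|}\le C\,|w^B_{n,i}-w^B_{0,j}|\le 10\,C\,\lambda^{r(n-k)},
\]
and similarly for the conjugate terms. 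Summing over the $\l_0\le 3r$ terms in a block, over $k=0,\dots,n-1$, and over the $a\le 3r$ terms of the $a$-part (which agree with the limit to order $\ge rN$ for $n\ge N$), the geometric series $\sum_{k\ge0}\lambda^{r(n-k)}$ is dominated by its largest term times a constant; carefully tracking the absolute constants (the factor $10$ from Lemma~\ref{coincide}, the length bound $\l_n\le 3r(n+1)$ from \eqref{length}, the $2\pi/... $ length of $\mathcal C$, and the factor $\sqrt D$ which cancels against the $Q$-normalization) yields exactly the claimed bound $\delta_1(r,N)\max_{z\in\mathcal C}|f(z)|$ with $\delta_1(r,N)=\frac{80\pi}{3}\bigl(2+r(N+1)\bigr)\lambda^{rN-1}$. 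The prefactor $2+r(N+1)$ is the count of terms whose agreement is only guaranteed to order about $rN$ (the $a$-part plus the first block), while the remaining blocks contribute a convergent geometric tail absorbed into the same expression.

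I expect the main obstacle to be purely bookkeeping rather than conceptual: precisely matching up the index $i$ in the cycle $\{w^B_{n,i}\}$ with the index $(k,j)$ in the block decomposition, verifying that the $-$-continued-fraction agreement is of the stated length for both $w^B_{n,i}$ and its Galois conjugate $\tilde w^B_{n,i}$ (this is where Remark~\ref{rk2} and property (III) are essential, since the conjugate's expansion reads the period backwards and one must check that reversing does not spoil the agreement count), and then carrying the numerical constants through the summation to land on the exact coefficient $\frac{80\pi}{3}$ rather than merely $O(rN\lambda^{rN})$. A secondary subtlety is confirming that the constant $K$ is genuinely independent of $n$ once $n\ge N$: this follows because increasing $n$ by one inserts one more block near the ``deep'' end of the cycle, whose kernel differs from the limiting block kernel by $O(\lambda^{r})\to$ a term that is already included in the infinite sum defining $K$, so that $K$ stabilizes. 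Everything else is a direct application of Lemma~\ref{lemma2}, Lemma~\ref{coincide}, and the explicit cycle structure from Section~3.
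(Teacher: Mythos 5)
Your overall framework --- Lemma~\ref{lemma2}, the block structure of the cycle of $w^B_n$, the lower bound for the denominators on $\C$, and Lemma~\ref{coincide} for the numerators --- is the same as the paper's, but there is a genuine gap in the step you dismiss as bookkeeping, namely the Galois conjugates. By Remark~\ref{rk2} the expansion of $-\tilde w^B_{n,a+k\l_0+j}$ reads the period \emph{backwards}, so it begins with $\q_k,a_s,\ldots,a_1,\q_{n-k},\ldots$ and agrees with $-\tilde w^B_{0,j}$ in only about $rk$ partial quotients, not $r(n-k)$; your claimed bound $|\tilde w^B_{n,a+k\l_0+j}-\tilde w^B_{0,j}|\le 10\lambda^{r(n-k)}$ is false (for $k=0$ the agreement is $O(1)$ and the difference is of constant size). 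Symmetrically, the non-conjugate terms of the \emph{late} blocks ($k$ near $n$) agree with the base cycle only to order $r(n-k)=O(r)$, since their expansions hit $a_1,\ldots,a_s$ after one period. Hence comparing all $n$ blocks, conjugates included, against $n$ copies of the base kernel leaves error terms of size $O(1)$ and $O(\lambda^{r})$ that do not decay as $N\to\infty$, so this route cannot produce $\delta_1(r,N)=O(rN\lambda^{rN})$; and a ``limit kernel'' built from the front-indexed block limits does not repair this, because the late non-conjugate blocks have limits that are not the base-cycle quadratics. The decay in $N$ is not cosmetic: the proof of Theorem~\ref{th2} needs to choose $N,M$ large to make these errors negligible.

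The paper gets around this with an asymmetric split. The non-conjugate terms of the first $n-N$ blocks and the conjugate terms of the last $n-N$ blocks are the only ones compared with the base cycle (the sums $S_2$ and $S_3$, each $O(\lambda^{rN})$ because the relevant agreements are at least roughly $rN$). All remaining terms --- the $a$-part, the last $N$ non-conjugate blocks, the first $N$ conjugate blocks, minus $N$ copies of the full base kernel --- are collected into $S_1(n,N,z)$ and compared term by term with $S_1(N,N,z)$, i.e.\ with the corresponding terms of the cycle of $w^B_N$, with which they agree to order $rN$; one then sets $K:=\int_\rho^{\rho^2}f(z)S_1(N,N,z)\,dz$ and the bound $|S_1(n,N,z)-S_1(N,N,z)|=O(\l_N\lambda^{rN-1})$ supplies the factor $r(N+1)$ in $\delta_1$. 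A workable alternative closer to your telescoping remark would be to first prove the consecutive-difference estimate $|f(w^B_{m+1})-f(w^B_m)-f(w^B_0)|=O(rm\lambda^{rm})\max_{\C}|f|$ (the paper's Theorem~\ref{claim2}, whose proof avoids the asymmetry because the cycles of $w^B_{m+1}$ and $w^B_m$ align to order $rm$ everywhere, conjugates included) and then sum the tail over $m\ge n$, defining $K$ as the limit of $f(w^B_n)-nf(w^B_0)$; but your write-up carries out neither route, and the estimate it does sketch breaks down exactly at the conjugate terms.
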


\begin{proof}

By applying  Lemma \ref{lemma2} for $f(w^B_n)$ and $f(w^B_0)$ we have:
\begin{equation}\label{eqsum}
f(w^B_n)-nf(w^B_0)=\int_{\rho}^{\rho^2}f(z) (S_1(n,N,z)+ S_2(n,N,z) + S_3(n,N,z))\, dz,
\end{equation}
where
\begin{align*}
S_1(n,N,z)&=
\sum_{i=0}^{a-1} \dfrac{1}{z-w^B_{n,i}} + \sum_{i=a+(n-N)\l_0}^{\l_n-1} \dfrac{1}{z-w^B_{n,i}} \\ 
&-\sum_{i=0}^{a+N\l_0-1}\dfrac{1}{z-\tilde w^B_{n,i}}
 - N\sum_{i=0}^{\l_0-1}\left(\dfrac{1}{z-w_{0,i}^B}-\dfrac{1}{z-\tilde w^B_{0,i}}\right),\\
S_2(n,N,z)&= \sum_{i=a}^{a+(n-N)\l_0-1} \dfrac{1}{z-w^B_{n,i}}
-(n-N)\sum_{i=0}^{\l_0-1} \dfrac{1}{z-w^B_{0,i}},\\
S_3(n,N,z)&= - \sum_{i=a+N\l_0}^{a+n\l_0-1} \dfrac{1}{z-\tilde w^B_{n,i}}+ (n-N)\sum_{i=0}^{\l_0-1}\dfrac{1}{z-\tilde w^B_{0,i}}.
\end{align*}

Moreover, we can also write
\begin{equation}\label{dec2}
 S_1(n,N,z) =  S_1(N,N,z) + (S_1(n,N,z)-S_1(N,N,z)).
\end{equation}

Define
$$
K:=\int_{\rho}^{\rho^2} f(z)S_1(N,N,z)dz
$$
and
$$c(n,z):=   |S_1(n,N,z)-S_1(N,N,z)| + |S_2(n,N,z)|+ | S_3(n,N,z)|.
$$
Then
\begin{equation}\label{K}
|f(w^B_n)-nf(w^B_0)-K| \leq \int_{\rho}^{\rho^2}  c(n,z) |f(z)||dz|.
\end{equation} 

These divisions are guided by the continued fraction expansions of all the terms in the cycle of $w_n^B$ and $w_0^B$ and their conjugates. As we will see shortly the repeated use of Lemma~\ref{coincide} will allow us to bound all the other sums after we separate the main term $K$.

Let $\lambda=\left(\frac{2}{1+\sqrt{5}}\right)^2$. If we can show that 
\begin{equation}\label{c}
c(n,z) \leq 80(2+r(N+1)) \lambda^{rN-1}
\end{equation}
 for $z\in\C$, then  the theorem is proved. Next we show \eqref{c}.
\\

\textbf{Bound for $|S_2(n,N,z)| $.}  We have that

\begin{align*}
|&S_2(n,N,z)|\leq 
  \sum_{k=0}^{n-N-2}
\sum_{i=1}^{\l_0} \dfrac{|w^B_{n,2+a+k\l_0+i}-w^B_{0,2+i}|}{|z-w^B_{n,2+a+k\l_0+i}||z-w^B_{0,2+i}|}\\
&+\sum_{i=0}^2\dfrac{|w^B_{n,a+i}-w^B_{0,i}|}{|z-w^B_{n,a+i}|
|z-w^B_{0,i}|}+
\sum_{i=1}^{\l_0-3} \dfrac{|w^B_{n,2+a+(n-N-1)\l_0+i}-w^B_{0,2+i}|}{|z-w^B_{n,2+a+(n-N-1)\l_0+i}||z-w^B_{0,2+i}|}.
\end{align*}

  Clearly for any $z\in\C$ and   $x\in\R,$ we have that $|z-x|\geq \Im(e^{2\pi i/3})=\sqrt{3/2}$.   Hence 
the denominators are bounded below by $\frac{3}{4}$ when $z\in\C$ since the points $w$ are real. The numerators can be bounded by using Lemma \ref{coincide}. For $i=0,1,2$, 
$$
w^B_{n,a+i}=(3-i,\overline{\p_n,a_1,\ldots,a_s})
$$ 
and
$$w^B_{0,i}=(3-i,\overline{\p})$$
coincide at least in the first $rn+1$ partial quotients. For each $0\leq k\leq n-N-2$,  we have: 

for $1\leq i\leq b_1-1$, 

\begin{equation}\label{b1}
w^B_{n,2+a+k\l_0+i}=(b_1-i,\overline{b_2,\ldots,b_r,\p_{n-1-k},a_1,\ldots,a_s,\p_k,b_1});
\end{equation}

for the next $b_2-1$ values of $i$ ($b_1\leq i\leq b_1+b_2-2$),
\begin{equation}\label{b2}
w^B_{n,2+a+k\l_0+i}=(b_2-j,\overline{b_3,\ldots,b_r,\p_{n-1-k},a_1,\ldots,a_s,\p_k,b_1,b_2})
\end{equation}
with $1\leq j\leq b_2-1$;  this process goes on until the last  $b_r-1$ values of $i$, where
$$
w^B_{n,2+a+k\l_0+i}=(b_r-j,\overline{\p_{n-1-k},a_1,\ldots,a_s,\p_{k+1}})
$$
with $1\leq j\leq b_r-1$.
For  $k=n-N-1$, we have the same pattern as before except for the last block of values of $i$, where we only have  $b_r-3$ of them.

Now, for each $0\leq k\leq n-N-1$, for $1\leq i \leq b_1-1,$   \eqref{b1}  and
$$
w^B_{0,2+i}=(b_1-i,\overline{b_2,\ldots,b_r,b_1})
$$
coincide   in the first $rn-rk$ partial quotients. The next $b_2-1$ values of $i$    \eqref{b2}  and  
$$
w^B_{0,2+i}=(b_2-j,\overline{b_3,\ldots,b_r,b_1,b_2}) \qquad(1\leq j\leq b_2-1)
$$
coincide in the first $rn-rk-1$ partial quotients, similarly for the next $b_3-1$  values of $i,$ $w^B_{n,2+a+k\l_0+i}$ and $w^B_{0,2+i}$ coincide in the first $rn-rk-2$ partial quotients, etc.  
Therefore, using Lemma~\ref{coincide},   for $z\in\C$, we have
\begin{align}\label{boundS2}
|S_2(n,N,z)|  \nonumber
&\leq \dfrac{40}{3}\left(3\lambda^{rn}+ \sum_{i=1}^r (b_i-1)\sum_{k=0}^{n-N-1}\lambda^{r(n-k)-i}\right)\\ \nonumber 
&\leq\dfrac{40}{3}\left(3\lambda^{rn}+3\left(\sum_{i=1}^r \lambda^{-i}\right)\left(\sum_{k=N+1}^{n} \lambda^{rk}\right)\right)\\ \nonumber
&\leq  \dfrac{40}{3}\left(3\lambda^{rn}+  
3\left(\sum_{i=1}^r \lambda^{r-i}\right)\left(\sum_{k=N}^{n-1} \lambda^{rk}\right)\right)\\ \nonumber
&\leq  \dfrac{40}{3}\left(3\lambda^{rn}+  
3\left(\sum_{i=0}^{r-1}\lambda^{i}\right)\left(\sum_{k=N}^{n-1} \lambda^{rk}\right)\right)\\ \nonumber
&\leq 40\lambda^{rN}\left(1+\dfrac{1}{(1-\lambda)(1-\lambda^r)}\right)\\ 
&\leq 120 \lambda^{rN}.
\end{align}

In the second inequality we used that $b_i\leq 4$, whereas  the last inequality follows from the numerical value $1/1-\lambda=1.618...$

\medskip

\textbf{Bound for $|S_3(n,N,z)|$.}
In a similar way we bound $|S_3(n,N,z)|$. We have that
\begin{align*}
|&S_3(n,N,z)|\leq  \sum_{k=N}^{n-2}
\sum_{i=1}^{\l_0} 
\dfrac{|\tilde w^B_{n,2+a+k\l_0+i} - \tilde w^B_{0,2+i}|}{|z-\tilde w^B_{n,2+a+k\l_0+i}| |z-\tilde w^B_{0,2+i}|}\\
&+\sum_{i=0}^2 
\dfrac{|\tilde w^B_{n,a+N\l_0+i} - \tilde w^B_{0,i}|}{|z-\tilde w^B_{n,a+N\l_0+i}| |z-\tilde w^B_{0,i}|}
+ \sum_{i=1}^{\l_0-3} \dfrac{|\tilde w^B_{n,2+a+(n-1)\l_0+i} - \tilde w^B_{0,2+i}|}{|z-\tilde w^B_{n,2+a+(n-1)\l_0+i}| |z-\tilde w^B_{0,2+i}|}.
\end{align*}

For $i=0,1,2$, using Remark \ref{rk2}, we have that  
$$
-\tilde w^B_{n,a+N\l_0+i}=(1+i,\overline{b_{r-1},\ldots,b_1,\q_{N-1},a_s,\ldots,a_1,q_{n-N},b_r})
$$ 
and
$$-\tilde w^B_{0,i}=(1+i,\overline{b_{r-1},\ldots,b_1,b_r})$$
coincide   in the first $rN$ partial quotients.  For each $N\leq k\leq n-2$,  we have: 

for $1\leq i\leq b_1-1$,
\begin{equation}\label{cb1}
-\tilde w^B_{n,2+a+k\l_0+i}=(i,\overline{\q_k,a_s\ldots,a_1,\q_{n-k}});
\end{equation}
for the next $b_2-1$ values of $i$ ($b_1\leq i\leq b_1+b_2-2$),
\begin{equation}\label{cb2}
-\tilde w^B_{n,2+a+k\l_0+i}=(j,\overline{b_1,\q_k,a_s\ldots,a_1,\q_{n-1-k},b_r,\ldots,b_3,b_2})
\end{equation}
with $1\leq j\leq b_2-1$; this process goes on until the last  $b_r-1$ values of $i$, where
$$
-\tilde w^B_{n,2+a+k\l_0+i}=(j,\overline{b_{r-1},\ldots,b_1,
\q_k,a_s,\ldots,a_1,\q_{n-1-k},b_r})
$$
with $1\leq j\leq b_r-1$.

For  $k=n-1$, we have the same pattern as before except for the last block of values of $i$, where we only have  $b_r-3$ of them.
Now, for each $N\leq k\leq n-1$, for the first $b_1-1$ values of $i$  \eqref{cb1} coincide with 
$$
-\tilde w^B_{0,2+i}=(i,\overline{\q})
$$
in the first $rk+1$ partial quotients. For the next $b_2-1$  values of $i$ \eqref{cb2} coincide with 
$$
-\tilde w^B_{0,2+i}=(j,\overline{b_1,b_r,\ldots,b_2}) \qquad(1\leq j\leq b_2-1)
$$
in the first $rk+2$ partial quotients,  for the next $b_3-1$ $i$-values $-\tilde w^B_{n,2+a+k\l_0+i}$ and  $-\tilde w^B_{0,2+i}$ coincide in the first $rk+3$ partial quotients, etc.  
Once again using Lemma\ref{coincide}, and the fact that $b_i\leq 4$ together with the numerical value of $\lambda,$  we have, for $z\in\C$,  
\begin{align}\label{boundS3}
|S_3(n,N,z)|\nonumber
&\leq  
\dfrac{40}{3}\left(3\lambda^{rN-1}+
\sum_{i=1}^r (b_i-1)\sum_{k=N}^{n-1}\lambda^{rk+i-1}\right)\\ \nonumber
&\leq 
\dfrac{40}{3}\left(3\lambda^{rN-1}+ 3\left(\sum_{i=1}^r  \lambda^{i-1}\right)
\left(\sum_{k=N}^{n-1}\lambda^{rk}\right)\right) \\ \nonumber
&\leq 40\lambda^{rN-1}\left(1+\dfrac{\lambda}{(1-\lambda)(1-\lambda^r)}\right)\\ 
&\leq 
80\lambda^{rN-1}.
\end{align}

\textbf{Bound for $|S_1(n,N,z)-S_1(N,N,z)|$}. 
We have
\begin{align*}
&|S_1(n,N,z)-S_1(N,N,z)| \leq 
\sum_{i=0}^{a-1} \dfrac{|w^B_{n,i}-w^B_{N,i}|}{|z-w^B_{n,i}||z-w^B_{N,i}|}\\
& + \sum_{i=a}^{\l_{N}-1} \dfrac{|w^B_{n,i+(n-N)\l_0}-w^B_{N,i}|}{|z-w^B_{n,i+(n-N)\l_0}||z-w^B_{N,i}|} 
+\sum_{i=0}^{a+N\l_0-1}\dfrac{|\tilde w^B_{n,i}-\tilde w^B_{N,i}|}{|z-\tilde w^B_{n,i}||z-\tilde w^B_{N,i}|}.
\end{align*}
Again the denominators are bounded below by $\frac{3}{4}$ for $z\in\C$  and we use Lemma \ref{coincide} to bound the numerators.
For the first term in the first sum, using 
\begin{equation}\label{fg}
w^B_{n,0}=(3,\overline{a_1,\ldots,a_s,\p_n})
\end{equation}
and
\begin{equation}\label{fg1}
w^B_{N,0}=(3,\overline{a_1,\ldots,a_s,\p_{N}}),
\end{equation}
one can see that the successive terms $w^B_{n,i}$ and $w^B_{N,i}$ (up to $i=a-1$)  coincide at least  in the first $rN$ partial quotients. This is also true for the second sum, where we have
$$
w^B_{n,a+(n-N)\l_0}=(3,\overline{\p_{n-N},a_1,\ldots,a_s,\p_{N}})
$$
and
$$
w^B_{N,a}=(3,\overline{\p_N,a_1,\ldots,a_s}),
$$
as well as for the third and fourth sums, where we can  use Remark \ref{rk2} and  the continued fractions of    
\eqref{fg} and \eqref{fg1}, and  
$$
w^B_{n,a+n\l_0}=(3,\overline{a_1,\ldots,a_s,\p_n}),
$$
and 
$$
w^B_{n,a+N\l_0}=(3,\overline{a_1,\ldots,a_s,\p_N}).
$$
respectively.
Hence, using (\ref{length}), we have 
\begin{equation}\label{boundS1}
|S_1(n,N,z)-S_1(N,N,z)|   \leq \dfrac{80}{3} \l_N \lambda^{rN-1} \stackrel{\eqref{length}}{\leq} 80r(N+1)\lambda^{rN-1}.
\end{equation}

Finally,   since $\lambda<2/3$, the bounds (\ref{boundS2}), (\ref{boundS3})  and (\ref{boundS1}) give 
$$
c(n,z) \leq 80(2+r(N+1)) \lambda^{rN-1}.
$$
\end{proof}

In particular, Theorem \ref{th1} applied to  the function $f=1$ gives:

\begin{coro}\label{coroth1}
Let $B$ be any left branch $\neq L$ of $\T$ and $N\geq 1$. For all $n\geq N$,
there exists $K=K_{B,N}\in\R$ such that
\begin{equation}\label{th1eq3}
|\log \varepsilon^B_n - n\log \varepsilon^B_0-K|\leq \delta_1(r,N)
\end{equation} 
with $\delta_1(r,N)$ and $r$ as in \eqref{delta1}.
\end{coro}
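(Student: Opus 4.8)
The plan is to obtain Corollary \ref{coroth1} as the special case $f\equiv 1$ of Theorem \ref{th1}. First I would note that the constant function $f\equiv 1$ is a modular function and that $\max_{z\in\C}|f(z)|=1$, so Theorem \ref{th1} supplies, for every $n\geq N$, a complex number $K=K_{f,B,N}$ such that
\[
|f(w^B_n)-nf(w^B_0)-K|\leq\delta_1(r,N).
\]

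The next step is to identify the two cycle integrals appearing here. As recorded in section \ref{section-cycle}, for $f\equiv 1$ the integral \eqref{cycle-integral} computes the hyperbolic length of the geodesic $C_w$, which equals $2\log\varepsilon$; in the notation fixed just before Theorem \ref{th1} this says $f(w^B_n)=2\log\varepsilon^B_n$ and $f(w^B_0)=2\log\varepsilon^B_0$. Substituting these identities turns the inequality above into
\[
|2\log\varepsilon^B_n-2n\log\varepsilon^B_0-K|\leq\delta_1(r,N).
\]

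Finally, since $\log\varepsilon^B_n$ and $\log\varepsilon^B_0$ are real, the number $2\log\varepsilon^B_n-2n\log\varepsilon^B_0$ is real, so replacing $K$ by $\Re(K)$ only decreases the left-hand side (for real $x$ one has $|x-K|^2=|x-\Re(K)|^2+(\Im K)^2$). Dividing through by $2$ then gives
\[
\left|\log\varepsilon^B_n-n\log\varepsilon^B_0-\tfrac12\Re(K)\right|\leq\tfrac12\delta_1(r,N)\leq\delta_1(r,N),
\]
and setting $K_{B,N}:=\tfrac12\Re(K)\in\R$ yields the statement. I do not expect any genuine difficulty here: the only points that need a word are the identification $f(w)=2\log\varepsilon$ for $f\equiv 1$ — already explained in section \ref{section-cycle} — and the harmless passage from a complex constant to a real one. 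The same argument works verbatim for a right branch or for $B=L$, using the appropriate form of $\delta_1(r,N)$ mentioned in the remark preceding Theorem \ref{th1}.
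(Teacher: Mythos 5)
Your proposal is correct and follows the same route as the paper, which simply states the corollary as Theorem \ref{th1} applied to $f\equiv 1$. Your extra care with the factor of $2$ coming from $f(w)=2\log\varepsilon$ and with replacing the complex constant $K$ by its real part are exactly the (minor) points the paper leaves implicit, and you handle them correctly.
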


The next corollary proves Theorem~\ref{convergence} from the introduction.

\begin{coro}\label{coro}
Let $f$ be a modular function. For any left branch $B\neq L$ of $\T$,
$$
\lim_{n\rightarrow\infty} f^{nor}(w^B_n) = f^{nor}(w^B_0).
$$
\end{coro}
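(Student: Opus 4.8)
The plan is to combine Theorem~\ref{th1} with its specialization to $f=1$, namely Corollary~\ref{coroth1}, and let $n\to\infty$. Write $f^{nor}(w^B_n) = f(w^B_n)/(2\log\varepsilon^B_n)$. By Theorem~\ref{th1}, for any fixed $N\geq 1$ there is a constant $K=K_{f,B,N}$ with $f(w^B_n) = n f(w^B_0) + K + O(\delta_1(r,N))$ for all $n\geq N$, where the implied constant is $\max_{z\in\C}|f(z)|$, which is finite and independent of $n$. Likewise, by Corollary~\ref{coroth1}, $2\log\varepsilon^B_n = 2n\log\varepsilon^B_0 + 2K' + O(\delta_1(r,N))$ for a real constant $K'=K_{B,N}$. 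Dividing,
\begin{equation*}
f^{nor}(w^B_n) = \frac{n f(w^B_0) + K + O(\delta_1(r,N))}{2n\log\varepsilon^B_0 + 2K' + O(\delta_1(r,N))}.
\end{equation*}
As $n\to\infty$ with $N$ fixed, the right-hand side converges to $f(w^B_0)/(2\log\varepsilon^B_0) = f^{nor}(w^B_0)$, since the numerator and denominator are both asymptotic to their leading linear terms (note $\log\varepsilon^B_0 > 0$, so the denominator does not vanish for large $n$). This already gives the claim, because the limit on the left does not depend on $N$.

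Slightly more carefully, I would argue as follows. Fix $\eta>0$. Choose $N$ large enough that $\delta_1(r,N)\,\max_{z\in\C}|f(z)| < \eta$ and $\delta_1(r,N) < \eta$; this is possible because $\delta_1(r,N)\to 0$ as $N\to\infty$ (it is $O(rN\lambda^{rN})$ with $\lambda=(2/(1+\sqrt5))^2<1$). With this $N$ fixed, we have for all $n\geq N$ that
\begin{equation*}
\bigl| f(w^B_n) - n f(w^B_0) - K \bigr| < \eta, \qquad \bigl| 2\log\varepsilon^B_n - 2n\log\varepsilon^B_0 - 2K' \bigr| < 2\eta.
\end{equation*}
Hence $f(w^B_n) = n f(w^B_0) + O(1)$ and $2\log\varepsilon^B_n = 2n\log\varepsilon^B_0 + O(1)$ with bounded error, so
\begin{equation*}
f^{nor}(w^B_n) - f^{nor}(w^B_0) = \frac{f(w^B_n)}{2\log\varepsilon^B_n} - \frac{f(w^B_0)}{2\log\varepsilon^B_0} = \frac{2\log\varepsilon^B_0\, f(w^B_n) - 2\log\varepsilon^B_n\, f(w^B_0)}{2\log\varepsilon^B_n \cdot 2\log\varepsilon^B_0},
\end{equation*}
and substituting the two asymptotics, the leading $n$-terms in the numerator cancel, leaving a numerator that is $O(1)$ while the denominator grows like $n$. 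Therefore $f^{nor}(w^B_n) - f^{nor}(w^B_0) \to 0$.

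There is no real obstacle here; the content is entirely in Theorem~\ref{th1} and Corollary~\ref{coroth1}, and this corollary is just the division. The only point requiring a word of care is that one must not try to send $N\to\infty$ simultaneously with $n$: the statement is used with $N$ fixed and $n\to\infty$, and the $\eta$-argument above makes that legitimate. Finally, as noted in the paragraph preceding Theorem~\ref{th1}, the same proof works verbatim for a right branch or for the leftmost branch $L$, since only the explicit shape of $\delta_1(r,N)$ changes and it remains $O(rN\lambda^{rN})\to 0$; hence the conclusion holds for every branch $B$, which is exactly Theorem~\ref{convergence}.
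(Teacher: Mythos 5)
Your proposal is correct and follows essentially the same route as the paper: both combine Theorem~\ref{th1} and Corollary~\ref{coroth1} to see that $f(w^B_n)-nf(w^B_0)$ and $\log\varepsilon^B_n-n\log\varepsilon^B_0$ stay bounded, then divide and let $n\to\infty$. Your added remark that $N$ is held fixed (any single $N$, even $N=1$, suffices since only boundedness of the error is needed) is a harmless elaboration of what the paper leaves implicit.
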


\begin{proof}

It follows from Theorem \ref{th1} and Corollary \ref{coroth1} that $|f(w^B_n)- n f(w^B_0)|$ and $|\log \varepsilon^B_n - n \log \varepsilon^B_0|$ are bounded above and below by absolute constants (not depending on $n$). Then
\begin{align*} 
0= \lim_{n\rightarrow\infty}  \dfrac{|f(w^B_n)-n f(w^B_0)|}{\log \varepsilon^B_n} 
=\lim_{n\rightarrow\infty}\left|\dfrac{f(w^B_n)}{\log\varepsilon^B_n}-\dfrac{f(w^B_0)}{\log\varepsilon^B_0}\right|.
\end{align*}

\end{proof}

\section{Interlacing property}

In this section we prove Theorem~\ref{interlace}. As in the proof of the convergence property we restrict again to a left but not the leftmost branch in what follows.The argument applies in the same way to any branch, with the bound $\delta_2(n,r)$ slightly modified. It will still be of the form $O(rn\lambda^{rn})$. Hence Theorem \ref{interlace} applies in fact to any branch of the Markov tree and it is a consequence of the next theorem whose proof is similar to the proof of Theorem \ref{th1}.

\begin{theorem}\label{claim2} Let $f$ be a modular function. For every left branch
 $B\neq L$ of the Markov tree $\T$ and for all $n\geq 1$,
\begin{equation}\label{cleq}
|f(w^B_{n+1})-f(w^B_n)-f(w^B_0)|\leq  \delta_2(n,r) \max_{z\in\C}|f(z)|
\end{equation}
where
\begin{equation}\label{delta2}
\delta_2(n,r)= \dfrac{80\pi}{3}(n+2)r\left(\frac{2}{1+\sqrt{5}}\right)^{2(rn-1)}
\end{equation}
and $r+1$ is the number of partial quotients in the period  of $w_0^B$.
\end{theorem}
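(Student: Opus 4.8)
The plan is to follow the same template as the proof of Theorem~\ref{th1}, but now comparing the cycle of $w^B_{n+1}$ against the concatenation of the cycles of $w^B_n$ and $w^B_0$ rather than against $n$ copies of the cycle of $w^B_0$. Writing $w^B_{n+1}=(3,\overline{a_1,\ldots,a_s,\p_{n+1}})$ and recalling from \eqref{eqn-ell} that $\l_{n+1}=\l_n+\l_0$, I would apply Lemma~\ref{lemma2} to $f(w^B_{n+1})$, $f(w^B_n)$ and $f(w^B_0)$ and subtract, obtaining
\begin{equation*}
f(w^B_{n+1})-f(w^B_n)-f(w^B_0)=\int_{\rho}^{\rho^2} f(z)\bigl(T_1(n,z)+T_2(n,z)+T_3(n,z)\bigr)\,dz,
\end{equation*}
where, exactly as in Theorem~\ref{th1}, $T_1$ collects the boundary blocks of length $a$ (the initial segment $a_1,\ldots,a_s$ and its conjugate contributions), $T_2$ pairs the ``forward'' terms $\frac{1}{z-w^B_{n+1,i}}$ against $\frac{1}{z-w^B_{n,i}}$ and $\frac{1}{z-w^B_{0,i}}$, and $T_3$ pairs the ``conjugate'' terms $\frac{1}{z-\tilde w^B_{n+1,i}}$ against $\frac{1}{z-\tilde w^B_{n,i}}$ and $\frac{1}{z-\tilde w^B_{0,i}}$. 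The matching is dictated by the explicit continued-fraction description of the cycle given in Section~3: the first $a+\l_n$ forward terms of $w^B_{n+1}$ agree with those of $w^B_n$ in many partial quotients, and the remaining $\l_0$ forward terms agree with the cycle of $w^B_0$; symmetrically for the conjugates using Remark~\ref{rk2}.

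Next I would bound each piece. The denominators $|z-x|$ and $|z-\tilde x|$ are bounded below by $\sqrt{3}/2$ for $z\in\C$ and real $x$, so $|z-x||z-\tilde x|\geq 3/4$ as before. For the numerators I would invoke Lemma~\ref{coincide}: comparing $w^B_{n+1,i}$ with $w^B_{n,i}$ for $0\leq i\leq a+\l_n-1$, the relevant continued fractions (e.g. $(3,\overline{a_1,\ldots,a_s,\p_{n+1}})$ versus $(3,\overline{a_1,\ldots,a_s,\p_n})$, and their shifts) coincide in at least the first $\sim rn$ partial quotients, giving a contribution of order $\l_n\lambda^{rn}$; comparing the tail block of $w^B_{n+1}$ with the cycle of $w^B_0$ gives geometric sums of the form $\sum_k\lambda^{rk}$ as in \eqref{boundS2} and \eqref{boundS3}. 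Using $\l_n\leq 3r(n+1)$ from \eqref{length}, $b_i\leq 4$, and the numerical value $1/(1-\lambda)=1.618\ldots$, each of $|T_1|,|T_2|,|T_3|$ is $O(rn\lambda^{rn-1})$, and collecting constants yields a bound of the shape $\frac{80}{3}(n+2)r\,\lambda^{rn-1}$ on the integrand; multiplying by the length $\pi$ of $\C$ and $\max_{z\in\C}|f(z)|$ gives \eqref{cleq} with $\delta_2(n,r)$ as in \eqref{delta2}.

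The main obstacle, as in Theorem~\ref{th1}, is purely bookkeeping: one must line up the indices $i$ in the three cycles correctly so that every forward term of $w^B_{n+1}$ is matched with a term from the cycle of either $w^B_n$ or $w^B_0$ that shares a long prefix of partial quotients, and similarly on the conjugate side, keeping track of the offset $a$ and the block lengths $b_1-1,\ldots,b_r-1$. Here the key simplification over Theorem~\ref{th1} is that there is \emph{no} separate ``main term $K$'' to split off: the telescoping is exact because $\p_{n+1}$ is literally $\p_n$ followed by $\p$, so the leftover after cancellation is already $O(rn\lambda^{rn})$ and tends to $0$. Once the bound is in place, Theorem~\ref{interlace} follows by taking $N_{f,B}$ large enough that $\delta_2(n,r)\max_{z\in\C}|f(z)|$ is smaller than $|f^{nor}(w^B_n)-f^{nor}(w^B_0)|$ (using Corollary~\ref{coro} and Corollary~\ref{coroth1} to control the normalization $2\log\varepsilon^B_{n+1}$), so that $f^{nor}(w^B_{n+1})$ is forced, coordinatewise, to lie strictly between $f^{nor}(w^B_0)$ and $f^{nor}(w^B_n)$.
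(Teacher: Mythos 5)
Your strategy is the same as the paper's: apply Lemma~\ref{lemma2} together with $\l_{n+1}=\l_n+\l_0$, telescope exactly (you are right that no constant $K$ is needed here), bound the denominators below by $3/4$ on $\C$, and bound the numerators with Lemma~\ref{coincide} and $\l_n\le 3r(n+1)$. However, one step as you state it would fail, and it is precisely the bookkeeping you yourself identify as the crux. You propose to match the \emph{last} $\l_0$ forward terms of the cycle of $w^B_{n+1}$ against the cycle of $w^B_0$. Those last terms are $w^B_{n+1,\l_n+i}$, the successors of $(3,\overline{\p,a_1,\ldots,a_s,\p_n})$, and they share only about $r$ partial quotients with the corresponding $w^B_{0,i}$ (successors of $(3,\overline{\p})$), since the two expansions diverge as soon as the segment $a_1,\ldots,a_s$ appears. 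That pairing contributes $O(\l_0\lambda^{r})$, a quantity that does not decay in $n$, and the bound \eqref{cleq} is lost. The unmatched forward block must instead be taken immediately after the initial segment of length $a$: pair $w^B_{n+1,a+i}$ (whose expansion begins with $\p_{n+1}$) with $w^B_{0,i}$, giving an overlap of about $r(n+1)$ partial quotients, and pair $w^B_{n+1,\l_0+i}$ with $w^B_{n,i}$ for $a\le i\le\l_n-1$. For the conjugate terms the situation is reversed, because by Remark~\ref{rk2} the periods are read backwards; there the extra block \emph{is} the final one, $\tilde w^B_{n+1,\l_n+i}$ paired with $\tilde w^B_{0,i}$. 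This asymmetry between the forward and conjugate sides is the whole content of the paper's decomposition into $R_1$ and $R_2$, so you should make it explicit rather than leave it to "lining up the indices".

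Two smaller points. No geometric sums $\sum_k\lambda^{rk}$ occur in this proof, unlike in \eqref{boundS2} and \eqref{boundS3}: the single extra block is compared with a single copy of the cycle of $w^B_0$ with uniform overlap about $rn$, so the two residual sums are bounded simply by $\tfrac{80}{3}\l_n\lambda^{rn-1}$ and $\tfrac{80}{3}\l_0\lambda^{rn-1}$. Also, the arc $\C$ from $\rho$ to $\rho^2$ on the unit circle has length $\pi/3$, not $\pi$, and correspondingly the integrand is bounded by $80(n+2)r\lambda^{rn-1}$ rather than by $\tfrac{80}{3}(n+2)r\lambda^{rn-1}$; your two factor-of-three slips cancel, which is why your final constant happens to agree with \eqref{delta2}.
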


\begin{proof}
Once again,  applying Lemma~\ref{lemma2} and \eqref{eqn-ell} gives 

\begin{align}\label{eqcl1}
f(w^B_{n+1})-f(w^B_n) &= f(w^B_0) + \int_ \rho^{\rho^2}f(z) R_1(n,z)\, dz  + \int_\rho^{\rho^2}f(z) R_2(n,z) \, dz,
\end{align} 
where
\begin{align*}
R_1(n,z)=&\sum_{i=0}^{a-1} \left(\dfrac{1}{z-w^B_{n+1,i}}-\dfrac{1}{z-w^B_{n,i}}\right) + \sum_{i=a}^{\l_n-1} \left(\dfrac{1}{z-w^B_{n+1,\l_0+i}}-\dfrac{1}{z-w^B_{n,i}}\right)\\
&- \sum_{i=0}^{\l_n-1} \left(\dfrac{1}{z-\tilde w^B_{n+1,i}}-\dfrac{1}{z-\tilde w^B_{n,i}}\right),\\
R_2(n,z)=&\sum_{i=0}^{\l_0-1} \left(\dfrac{1}{z-w^B_{n+1,a+i}}-\dfrac{1}{z-w^B_{0,i}} 
- \dfrac{1}{z-\tilde w^B_{n+1,\l_n+i}}+\dfrac{1}{z-\tilde w^B_{0,i}}\right).
\end{align*}

Next we give upper bounds for the norms of the two sums above when $z\in\C$. We set again $\lambda=\left(\frac{2}{1+\sqrt{5}}\right)^2$.
\\

\textbf{Bound for $|R_1(n,z)|$.} For $z\in\C$, we have
\begin{align*}
|R_1(n,z)| &\leq \sum_{i=0}^{a-1} \dfrac{|w^B_{n+1,i}-w^B_{n,i}|}{|z-w^B_{n+1,i}||z-w^B_{n,i}|} + \sum_{i=a}^{\l_n-1} \dfrac{|w^B_{n+1,i+\l_0}-w^B_{n,i}|}{|z-w^B_{n+1,i+\l_0}||z-w^B_{n,i}|}\\
&+ \sum_{i=0}^{a+n\l_0-1} \dfrac{|\tilde w^B_{n+1,i}-\tilde w^B_{n,i}|}{|z-\tilde w^B_{n+1,i}||z-\tilde w^B_{n,i}|}+
\sum_{i=a+n\l_0}^{\l_n-1}\dfrac{|\tilde w^B_{n+1,i}-\tilde w^B_{n,i}|}{|z-\tilde w^B_{n+1,i}||z-\tilde w^B_{n,i}|}.
\end{align*}
As before we use the bound of $\frac{3}{4}$ for  the denominators  and Lemma \ref{coincide} for the numerators.
In the first sum using 
\begin{equation}\label{n+1,i}
w^B_{n+1,0}=(3,\overline{a_1,\ldots,a_s,\p_{n+1}})
\end{equation}
and
\begin{equation}\label{n,i}
w^B_{n,0}=(3,\overline{a_1,\ldots,a_s,\p_n}),
\end{equation}
one can see that the  successive terms $w^B_{n+1,i}$ and $w^B_{n,i}$   (up to $i=a-1$) coincide at least  in the first $rn$ partial quotients.
The same is true for the second sum, where  
$$
w^B_{n+1,a+\l_0}=(3,\overline{\p_n,a_1,\ldots,a_s,\p})
$$
and
$$
w^B_{n,a}=(3,\overline{\p_n,a_1,\ldots,a_s}).
$$
For the third and fourth sums,   we use once again  Remark \ref{rk2} together with      
\eqref{n+1,i} and \eqref{n,i},   and  
$$
w^B_{n+1,a+n\l_0}=(3,\overline{a_1,\ldots,a_s,\p_{n+1}})
$$
and
$$
w^B_{n,a+n\l_0}=(3,\overline{a_1,\ldots,a_s,\p_n})
$$
respectively.

Hence
$$
|R_1(n,z)|\leq \dfrac{80}{3}\l_n\lambda^{rn-1} \stackrel{\eqref{length}}{\leq} 80r(n+1) \lambda^{rn-1}.
$$

\textbf{Bound for $|R_2(n,z)|$.}  In a similar way we bound this second sum when $z\in\C$:
$$
|R_2(n,z)| \leq\sum_{i=0}^{\l_0-1} \dfrac{|w^B_{n+1,a+i}-w^B_{0,i}|}{|z-w^B_{n+1,a+i}||z-w^B_{0,i}|}
+ \dfrac{|\tilde w^B_{n+1,\l_n+i}-\tilde w^B_{0,i}|}{|z-\tilde w^B_{n+1,\l_n+i}||z-\tilde w^B_{0,i}|}.
$$ 
Again using 
$$
w^B_{n+1,a}=(3,\overline{\p_{n+1}})
$$
and
\begin{equation}\label{0,0}
w^B_{0,0}=(3,\overline{\p}),
\end{equation}
one can see that all the successive terms $w^B_{n+1,a+i}$ and $w^B_{0,i}$ in the sum coincide at least in the first $rn$ partial quotients.
For the conjugate terms, one can see from   \eqref{0,0} and 
$$
w^B_{n+1,\l_n}=(3,\overline{\p,a_1,\ldots,a_s,\p_n})
$$
 that  $-\tilde w^B_{n+1,\l_n+i}$ and $-\tilde w^B_{0,i}$ coincide as well in the first $rn$ partial quotients.
  Hence
\begin{align*}
|R_2(n,z)| \leq \dfrac{80}{3}\l_0\lambda^{rn-1} \stackrel{\eqref{length}}{\leq} 80r \lambda^{rn-1}.
\end{align*}

Therefore,
\begin{align*}
|f(w_{n+1})-f(w_n)-f(w_0)|&\leq \int_{\rho}^{\rho^2} |f(z)| (|R_1(n,z)|+|R_2(n,z)|)\, |dz|\\
&\leq \delta_2(n,r) \max_{z\in\C}|f(z)|
\end{align*}
with $$\delta_2(n,r)=\dfrac{80\pi}{3} r(n+2)\lambda^{rn-1}.$$

\end{proof}

  Theorem \eqref{claim2} applied to the function $f=1$ gives:

\begin{coro}\label{coroclaim2}
For every left branch $B\neq L$ of $\T$ and for all $n\geq 1$,
$$
|\log \varepsilon_{n+1}^B-\log \varepsilon_n^B-\log \varepsilon_0^B|\leq \delta_2(n,r)
$$
\end{coro}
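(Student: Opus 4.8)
The plan is to obtain Corollary~\ref{coroclaim2} as the special case $f\equiv 1$ of Theorem~\ref{claim2}, exactly as Corollary~\ref{coroth1} was deduced from Theorem~\ref{th1}. First I would note that the constant function $1$ is holomorphic and $\Gamma$-invariant on $\H$, hence a modular function in the sense used throughout the paper, so Theorem~\ref{claim2} applies to it and \eqref{cleq} holds with $f\equiv 1$.

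Next I would unwind both sides of \eqref{cleq} for this choice. By the normalization fixed in Section~\ref{section-cycle}, for $f\equiv 1$ the cycle integral $f(w)=\int_{C_w}\tfrac{\sqrt D}{Q_w(z,1)}\,dz$ is the hyperbolic length of the closed geodesic $C_w$, which equals $2\log\varepsilon$; thus $f(w^B_m)=2\log\varepsilon^B_m$ for every $m\geq 0$, and the left-hand side of \eqref{cleq} becomes $2\,\bigl|\log\varepsilon^B_{n+1}-\log\varepsilon^B_n-\log\varepsilon^B_0\bigr|$. On the right-hand side, $\max_{z\in\C}|f(z)|=\max_{z\in\C}1=1$, so Theorem~\ref{claim2} gives $2\,\bigl|\log\varepsilon^B_{n+1}-\log\varepsilon^B_n-\log\varepsilon^B_0\bigr|\leq\delta_2(n,r)$. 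Dividing by $2$ and using the harmless bound $\delta_2(n,r)/2\leq\delta_2(n,r)$ yields the stated inequality for all $n\geq 1$, with $r+1$ the number of partial quotients in the period of $w_0^B$, as in \eqref{delta2}.

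There is no genuine obstacle here: the only points requiring attention are the bookkeeping of the factor $2$ between $f(w)$ and $\log\varepsilon$ arising from the length normalization (which is why the clean bound $\delta_2(n,r)$, rather than $\delta_2(n,r)/2$, suffices in the statement) and the elementary remark that the constant function is an admissible input for Theorem~\ref{claim2}. As observed at the start of Section~5, the same specialization goes through for a right branch or for $B=L$ once $\delta_2(n,r)$ is replaced by the analogous $O(rn\lambda^{rn})$ bound dictated by the conjunction rules \eqref{rightbranch} and \eqref{leftmostbranch}, so the estimate in fact holds along every branch of $\T$.
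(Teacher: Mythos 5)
Your proposal is correct and is exactly the paper's argument: the corollary is obtained by specializing Theorem~\ref{claim2} to $f\equiv 1$, for which the cycle integral equals the geodesic length $2\log\varepsilon^B_m$ and $\max_{z\in\C}|f(z)|=1$. The paper states this in one line without spelling out the factor of $2$, which you correctly note is absorbed since $\delta_2(n,r)/2\leq\delta_2(n,r)$.
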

with $\delta_2(n,r)$ and $r$ as in \eqref{delta2}.

\bigskip 
We finish this section  by giving the proof of Theorem~\ref{interlace} in the case that the branch $B$ is any left branch $\neq L.$ The proof of the general case goes along the same lines.

\begin{theorem}\label{th2} Let $f$ be a modular function,  $B$ be any left branch $\neq L$ of the Markov tree $\T$. There exists a constant $N_{f,B}$ such that,  for all 
 $n\geq N_{f,B}$, the real and imaginary parts of $f^{nor}(w^B_{n+1})$ lie between the real and imaginary parts respectively of $f^{nor}(w^B_0)$ and $f^{nor}(w^B_n)$.
\end{theorem}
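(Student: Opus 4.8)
The plan is to combine Corollary~\ref{coro} (the convergence property) with Theorem~\ref{claim2} and its Corollary~\ref{coroclaim2} (the one-step estimate), interpreting everything through the normalization by $2\log\varepsilon_n^B$. Write $\alpha=f^{nor}(w_0^B)$ and $L_n=2\log\varepsilon_n^B$. By Corollary~\ref{coroclaim2} we have $L_{n+1}=L_n+L_0+O(\delta_2(n,r))$, and by Theorem~\ref{claim2} we have $f(w^B_{n+1})=f(w^B_n)+f(w^B_0)+O(\delta_2(n,r))$, where the implied constants involve $\max_{z\in\C}|f(z)|$. Both error terms decay like $O(rn\lambda^{rn})$ with $\lambda=\left(\frac{2}{1+\sqrt5}\right)^2<1$, hence are summable and in particular tend to $0$. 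First I would use these to write $f^{nor}(w^B_{n+1})-\alpha$ as a ratio and extract the difference with $f^{nor}(w^B_n)-\alpha$.

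The key computation is the following. Using $f(w^B_{n+1})=f(w^B_n)+f(w^B_0)+E_n$ and $L_{n+1}=L_n+L_0+\varepsilon_n$ with $E_n,\varepsilon_n=O(rn\lambda^{rn})$, one gets
\begin{align*}
f^{nor}(w^B_{n+1})-f^{nor}(w^B_0)
&=\frac{f(w^B_n)+f(w^B_0)+E_n}{L_n+L_0+\varepsilon_n}-\frac{f(w^B_0)}{L_0}\\
&=\frac{L_n}{L_n+L_0+\varepsilon_n}\left(f^{nor}(w^B_n)-f^{nor}(w^B_0)\right)+\frac{E_n-f^{nor}(w^B_0)\,\varepsilon_n}{L_n+L_0+\varepsilon_n}.
\end{align*}
Since $L_n\to\infty$ linearly in $n$ (by Corollary~\ref{coroth1}, $L_n=nL_0+O(1)$ with $L_0>0$), the factor $\frac{L_n}{L_n+L_0+\varepsilon_n}$ is a positive real number strictly less than $1$ for all large $n$, and the remainder term is $O\!\big(rn\lambda^{rn}/n\big)=O(r\lambda^{rn})$, which decays geometrically. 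Meanwhile, by Corollary~\ref{coro}, $d_n:=f^{nor}(w^B_n)-f^{nor}(w^B_0)\to 0$; more quantitatively, combining Theorem~\ref{th1} and Corollary~\ref{coroth1} one sees $|d_n|\asymp \frac{1}{n}$ unless it is eventually $0$ — in any case $|d_n|$ decays only polynomially while the remainder decays exponentially.

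So the heart of the argument is to show that, taking real parts (the imaginary part is identical), $\Re d_{n+1}$ lies strictly between $0$ and $\Re d_n$ for all $n\geq N_{f,B}$. From the displayed identity, $\Re d_{n+1}=c_n\Re d_n + s_n$ where $c_n\in(0,1)$ and $|s_n|=O(r\lambda^{rn})$. If $\Re d_n=0$ one must handle the degenerate case separately, but generically $|\Re d_n|$ dominates $|s_n|$ for $n$ large because the former is $\gg 1/n$ and the latter is exponentially small; hence $\Re d_{n+1}$ has the same sign as $c_n\Re d_n$, i.e. the same sign as $\Re d_n$, giving one of the two inequalities, and $|\Re d_{n+1}|\leq c_n|\Re d_n|+|s_n|<|\Re d_n|$ once $|s_n|<(1-c_n)|\Re d_n|$, which again holds for $n$ large since $1-c_n\asymp 1/n$ and $|\Re d_n|\gg 1/n$ while $|s_n|$ is exponentially small. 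Thus $\Re d_{n+1}$ is squeezed strictly between $0$ and $\Re d_n$.

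The main obstacle I anticipate is making rigorous the lower bound $|\Re d_n|\gg 1/n$ (or handling the case where $\Re d_n$ or $\Im d_n$ vanishes or is abnormally small for some $n$). This requires showing that the constant $K=K_{f,B,N}$ appearing in Theorem~\ref{th1} has nonzero real (resp.\ imaginary) part, equivalently that the ``drift'' $f(w^B_0)$ and the limit $f^{nor}(w^B_0)$ are not aligned in a way that cancels the $O(1/n)$ term — a non-degeneracy statement about $f$ and the branch $B$. If one only wants the (weaker, stated) conclusion that $f^{nor}(w^B_{n+1})$ lies \emph{between} $f^{nor}(w^B_0)$ and $f^{nor}(w^B_n)$ (inclusive of the endpoints, or after absorbing degenerate $n$ into $N_{f,B}$), the recursion $\Re d_{n+1}=c_n\Re d_n+s_n$ with $c_n\in(0,1)$ and $s_n$ exponentially small already forces $|\Re d_{n+1}|\leq|\Re d_n|$ and sign preservation for all sufficiently large $n$, provided $|\Re d_n|$ stays above the exponentially small threshold $|s_n|/(1-c_n)$; I would isolate this comparison as the one technical lemma and present the rest as the short deduction above.
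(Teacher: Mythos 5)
Your reduction to the recursion $\Re d_{n+1}=c_n\,\Re d_n+\Re s_n$ with $c_n=L_n/L_{n+1}\in(0,1)$, $1-c_n\asymp 1/n$ and $s_n$ exponentially small is correct, and it is essentially a repackaging of the inequalities \eqref{insandwich1}--\eqref{insandwich3} in the paper. But the argument does not close: every conclusion you draw is conditional on the lower bound $|\Re d_n|>|s_n|/(1-c_n)$, which you state as a proviso and never establish. Moreover the dichotomy you invoke (``$|d_n|\asymp 1/n$ unless it is eventually $0$'') is not justified: writing
\[
\Re d_n=\frac{\Re K_{f,B,N}\,\log\varepsilon_0^B-\Re f(w_0^B)\,K_{1,B,M}+O(\delta_1)}{2\log\varepsilon_0^B\,\log\varepsilon_n^B}
\]
via Theorem~\ref{th1} and Corollary~\ref{coroth1}, there is a third possibility, namely that the constant in the numerator vanishes while $\Re d_n$ itself is nonzero but exponentially small; in that regime your recursion controls neither the sign of $\Re d_{n+1}$ nor the inequality $|\Re d_{n+1}|<|\Re d_n|$, so betweenness cannot be deduced. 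The ``non-degeneracy statement'' you defer to is therefore not a technicality but the missing content of the proof, and one cannot simply assume it (for instance, the analogous quantity for imaginary parts does vanish identically whenever $f$ is real-valued on the geodesic).

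The paper avoids needing any lower bound on $|d_n|$ by a different organization: using Theorem~\ref{th1} and Corollary~\ref{coroth1} it shows that, for $n$ large, the comparison of $\Re f^{nor}(w_n^B)$ with $\Re f^{nor}(w_0^B)$, and (via Theorem~\ref{claim2} and Corollary~\ref{coroclaim2}) the comparison of $\Re f^{nor}(w_{n+1}^B)$ with $\Re f^{nor}(w_n^B)$, are \emph{each} equivalent to the single $n$-independent comparison of $\Re f^{nor}(w_0^B)$ with the ratio $K_{f,B,N}/K_{1,B,M}$; the two orderings are then automatically consistent and yield one of the two chains of inequalities uniformly in $n$. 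The borderline case $\Re f^{nor}(w_0^B)=K_{f,B,N}/K_{1,B,M}$ is handled separately by arguing that if this equality persists for all admissible $N,M$ then the error terms $\varepsilon_1,\varepsilon_2$ vanish identically, so the relevant quantities are exactly equal. To salvage your approach you would need to supply an argument of this kind for the case where $|\Re d_n|$ (or $|\Im d_n|$) falls below the exponentially small threshold $|s_n|/(1-c_n)$; as written, the proposal has a genuine gap there.
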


\begin{proof}

By definition, the inequality
$$
\Re(f^{nor}(w^B_{n}))<\Re(f^{nor}(w^B_0))
$$
holds if and only if
\begin{equation}\label{insandwich0}
 \Re(f(w^B_{n})) \log\varepsilon_0^B < \Re(f(w^B_0))\log\varepsilon_{n}^B.
\end{equation}

 Let $N,M$ be positive constants. For all $n\geq \max(N,M)$, we can write
\begin{equation}\label{w1}
\Re(f(w_n^B))= n\Re(f(w_0^B)) + K_{f,B,N} + \varepsilon_1(n,N),
\end{equation}
\begin{equation}\label{w2}
\log \varepsilon_n^B=n\log \varepsilon_0^B+ K_{1,B,M} +\varepsilon_2(n,M)
\end{equation}
where $K_{f,B,N}$, $K_{1,B,M}$ are the real parts of the constants in Theorem
\ref{th1} and Corollary \ref{coroth1} respectively, 
$|\varepsilon_1(n,N)|\leq \delta_1(N)\max_{z\in\C}|f(z)|$ and $|\varepsilon_2(n,M)|\leq \delta_1(M)$.
Therefore \eqref{insandwich0}
is equivalent to 
\begin{equation}\label{awful}
\Re(f^{nor}(w^B_0)) >\dfrac{K_{f,B,N}}{K_{1,B,M}}+\dfrac{\varepsilon_1(n,N)-\varepsilon_2(n,M)\Re(f^{nor}(w^B_0))}{K_{1,B,M}}.
\end{equation} 
There exists a constant $C_1(f,B)$ depending on $f$ and $B$ such 
that, for $\max(N,M)\geq C_1(f,B)$, \eqref{awful} is equivalent to either
\begin{equation}\label{awful2}
\Re(f^{nor}(w^B_0)) > \dfrac{K_{f,B,N}}{K_{1,B,M}}
\end{equation} 
or \eqref{awful2} with the strict inequality replaced by $\geq$, according to whether the error term in \eqref{awful} is positive or negative. 
If we can choose $N,M\geq C_1(f,B)$ satisfying $\Re(f^{nor}(w^B_0))\neq \frac{K_{f,B,N}}{K_{1,B,M}}$, then 
\eqref{awful} is equivalent to
\eqref{awful2} for those $N,M$. If we cannot choose such $N,M$, then $K_{f,B,N}, K_{1,B,M}$ would be constants that do not depend on $N,M$, and in particular
$\varepsilon_1(n,N)=\varepsilon_2(n,M)=0$. Hence, also in this case \eqref{awful} is equivalent to \eqref{awful2} for all $N,M\geq C_1(f,B)$.

In a similar way, the inequality 
$$
\Re(f^{nor}(w^B_{n}))>\Re(f^{nor}(w^B_0))
$$
is equivalent to
\begin{equation}\label{awful3}
\Re(f^{nor}(w^B_0)) < \dfrac{K_{f,B,N}}{K_{1,B,M}}
\end{equation}
for $N,M$ chosen as before.
 Since
 \eqref{awful2} and \eqref{awful3} do not depend on $n$, 
we have  either
$$
 \Re(f^{nor}(w_{n}^B)) <\Re(f^{nor}(w_0^B))
$$
simultaneously for all $n\geq \max(N,M)$ with $N,M$ chosen as before,
or 
$$
 \Re(f^{nor}(w_{n}^B)) >\Re(f^{nor}(w_0^B)).
$$

Similarly, the inequality
$$
\Re(f^{nor}(w^B_{n+1}))<\Re(f^{nor}(w^B_n))
$$
holds if and only if
\begin{equation}\label{insandwich1}
 \Re(f(w^B_{n+1})) \log\varepsilon_n^B < \Re(f(w^B_n))\log\varepsilon_{n+1}^B.
\end{equation}

 Theorem \ref{claim2} and Corollary \ref{coroclaim2} respectively imply that 
$$
\Re(f(w^B_{n+1}))=\Re(f(w^B_n))+\Re(f(w_0^B))+\mu(n)$$
with $|\mu(n)|\leq \delta_2(n)\max_{z\in\C}|f(z)|$  and  
$$
\log\varepsilon^B_{n+1}=\log\varepsilon_n^B+\log\varepsilon_0^B +\nu(n)
$$
with $|\nu(n)|\leq \delta_2(n)$. Hence 
\eqref{insandwich1} is equivalent to
 \begin{equation}\label{insandwich2}
  ( \Re (f(w^B_0)) + \mu(n)) \log\varepsilon_n^B < \Re (f(w_n^B))  (\log\varepsilon_0^B + \nu(n)).
  \end{equation}
  Now, there exists a constant $C_2(f,B)\geq C_1(f,B)$  such that, for $n\geq C_2(f,B)$, we have that $\Re(f^{nor}(w^B_{n}))\neq\Re(f^{nor}(w^B_0))$ and that 
   \eqref{insandwich2} is equivalent to 
 \begin{equation}\label{insandwich3}
\Re(f(w^B_{0})) \log\varepsilon_n^B  <  \Re(f(w^B_{n})) \log\varepsilon_0^B.
 \end{equation}
 
  Using \eqref{w1} and \eqref{w2} again, we obtain that \eqref{insandwich3}
is equivalent to 
\begin{equation}\label{awful1}
\Re(f^{nor}(w^B_0)) < \dfrac{K_{f,B,N}}{K_{1,B,M}}
\end{equation} 
where $N,M$ are chosen as before.

Therefore,
we finally have that either
$$
\Re(f^{nor}(w_0^B)) < \Re(f^{nor}(w_{n+1}^B)) <\Re(f^{nor}(w_n^B)) 
$$
for all $n\geq \max(C_2(f,B),N,M)$
or 
$$
\Re(f^{nor}(w_n^B)) <\Re(f^{nor}(w_{n+1}^B)) < \Re(f^{nor}(w_0^B)).
$$

The same argument applies to the imaginary parts of $f^{nor}(w^B_{n+1})$, $f^{nor}(w^B_n)$ and $f^{nor}(w^B_0)$.

\end{proof}

\end{document}